\newtheorem{theorem}{Theorem}
\newtheorem{proposition}{Proposition}
\newtheorem{remark}{Remark}
\newtheorem{example}{Example}
\DeclareMathOperator{\N}{\mathbb{N}}
\DeclareMathOperator{\C}{\mathbb{C}}
\DeclareMathOperator{\R}{\mathbb{R}}
\DeclareMathOperator{\Z}{\mathbb{Z}}
\newcommand{\scal}[2]{\langle{#1},{#2}\rangle} 
\newcommand{\hd}[1]{{\color{black}{#1}}}
\DeclareMathOperator{\ba}{\mathbf a}
\title{\bf Solving moment and polynomial optimization problems on Sobolev spaces\footnote{This work was co-funded by the European Union under the project ROBOPROX (reg. no. CZ.02.01.01{\slash}00{\slash}22{\_}008{\slash}0004590).}}
\begin{document}

\author{Didier Henrion$^{1,2}$, Alessandro Rudi}

\footnotetext[1]{CNRS, LAAS, Universit\'e de Toulouse, France. }
\footnotetext[2]{Faculty of Electrical Engineering, Czech Technical University in Prague, Czechia.}
\footnotetext[3]{Inria, Ecole Normale Sup\'erieure, CNRS, PSL Research University, Paris, France. }

\date{Draft of \today}

\maketitle

\begin{abstract}
Using standard tools of harmonic analysis, we state and solve the problem of moments for \hd{non-negative} measures supported on the unit ball of a Sobolev space of multivariate periodic trigonometric functions. We describe outer and inner semidefinite approximations of the cone of Sobolev moments. They are the basic components of an infinite-dimensional moment-sums of squares hierarchy, allowing to \hd{numerically solve} non-convex polynomial optimization problems on infinite-dimensional Sobolev spaces with global convergence guarantees.\\[1em]
{\bf Keywords}: Polynomial optimization, semidefinite optimization, moment problems, harmonic analysis. AMS MSC: 90C23, 90C22, 44A60, 42-08.
\end{abstract}

\section{Introduction}

The {\em moment-SOS hierarchy}, also known as the Lasserre hierarchy, was originally introduced in the early 2000s to solve globally finite-dimensional {\em polynomial optimization problems} (POP) \cite{l10,l09,bpt13,n23}. Then it was extended to polynomial differential equations and their optimal control, see \cite{hkl20} for a recent overview of applications and more references. The main technical ingredients on which the moment-SOS hierarchy relies are sums of squares (SOS) representations of positive polynomials (the so-called Positivstellens\"atze) \cite{m08} and its dual problem of moments \cite{s17} providing conditions satisfied by moments of a \hd{non-negative} measure supported on a finite-dimensional set. These conditions are truncated to finite degrees, yielding a converging hierarchy of semidefinite optimization problems of increasing size that can be solved numerically using interior-point algorithms \cite{nn94,bn01}.

After more than two decades of research in polynomial optimization, the application range of the moment-SOS hierarchy is now being extended to challenging nonconvex nonlinear optimization problems formulated on infinite-dimensional functional spaces, e.g. problems of calculus of variations or partial differential equations. The moment-SOS hierarchy has been recently extended to reproducible kernel Hilbert spaces for polynomial optimization \cite{rmb20} or optimal transport \cite{mvbvr21}.
Measures supported on infinite-dimensional spaces arise naturally as relaxed controls for infinite-dimensional optimization \cite{f99}. Ambrosio's superposition principle \cite{a08} whose finite-dimensional Euclidean version was used in \cite{hk14} to prove convergence of the moment-SOS hierarchy for approximating the region of attraction of polynomial differential equations, has been extended to infinite-dimensional Hilbert or Banach spaces \cite{at17}. Measures on infinite-dimensional spaces are also used in fluid dynamics, see e.g. \cite{fmw21} or more recently \cite{rt22} which presents itself as an infinite-dimensional extension of the finite-dimensional SOS setup of \cite{tgd18}. The solution of the moment problem for measures supported on infinite-dimensional spaces is more technically involved than its finite-dimensional counterpart, see \cite{ikr14,s18} and references therein. The recent reference \cite{hikv23} shows however that the heat equation with polynomial nonlinearities can be solved numerically with the infinite-dimensional moment-SOS hierarchy, with convergence guarantees provided by a recent solution of the moment problem on nuclear spaces \cite{ikkm23}.

The present paper aims at contributing to the numerical solution of the {\em infinite-dimensional} moment problem in a \hd{specific} functional analytic framework\hd{,} which makes its analysis as well as its numerical implementation as simple as possible. We use basic tools from harmonic analysis to state and solve the moment problem on the Sobolev space of periodic multivariate trigonometric functions. This allows us to construct an infinite-dimensional moment-SOS hierarchy to solve various kinds of {\em Sobolev POPs}, namely non-convex POPs on Sobolev spaces, with global convergence guarantees.

In order to keep this paper as short and elementary as possible, we do not describe here potential applications of the moment-SOS hierarchy for solving non-linear calculus of variations problems, or optimal control problem involving non-linear partial differential equations. Such applications are certainly very promising, and they will be reported in further communications.

The outline of the paper is as follows. In Section \ref{sec:mom} we state our Sobolev moment problem. In Section \ref{sec:fourier} we reformulate our Sobolev moment problem as a moment problem in the Fourier coefficients. In Section \ref{sec:approx} we propose inner and outer semidefinite approximations of the Sobolev moment cone. This allows us to solve different types of Sobolev POPs with an infinite-dimensional moment-SOS hierarchy in Section \ref{sec:sobolev}. Concluding remarks and potential extensions are mentioned in Sections \ref{sec:recover} and \ref{sec:conclu}.

\section{Sobolev moment problem}\label{sec:mom}

{Let $\N$ denote the set of natural numbers, including zero. Let $\Z$ denote the set of relative integers, including zero. Let $n$ and $m$ be given positive integers.}
Consider the space of Sobolev functions on the $n$-dimensional unit torus $T^n$ whose derivative up to order $m$ are square integrable:
$$H^m(T^n) := \{ f {\in L^2}: T^n \to \C : \|f\|^2_{H^m(T^n)} < \infty\}\hd{,}$$
where the norm \hd{$\| \cdot\|_ {H^m(T^n)}$} is defined as:
$$\|f\|^2_{H^m(T^n)} := \sum_{|a| \leq m} \int_{T^n} \|D^a f(x)\|^2 dx$$
with {$\|.\|$ denoting the Euclidean norm,} \hd{$a=(a_i)_{i=1,\ldots n}\in\mathbb{N}^n$}, $|a| = \sum_{i=1}^n a_i$ and $D^a = \frac{\partial^{|a|}}{\partial x_1^{a_1}\dots \partial x_n^{a_n}}$.

Let $c_0(\Z^n)$ denote the set of sequences consisting of a finite number of elements of $\Z^n$, allowing repetitions.
Let us consider the closed bounded unit ball of $H^m(T^n)$
\[
B:=\{f \in H^m(T^n) : \|f\|_{H^m(T^n)} \leq 1\}.
\]
Let $\mu$ be a measure\footnote{{In this paper, by measure we mean a non-negative Radon measure, i.e. locally finite and tight. Locally finite means that every point has a neighborhood of finite measure. Tight (or inner regular) means that the measure of any Borel set $X$ is the supremum of the measure of compact sets included in $X$.}} supported on $B$, and let $\ba \in c_0(\Z^n)$. The {\it moment} of $\mu$ of index $\ba$ is defined as:
\begin{equation}
y_{\ba} := \int_B m_{\ba}(f) ~ d\mu(f)
\end{equation}
\hd{where} $$m_{\ba}(f) := \prod_{a \in \ba} \scal{f}{e_a}_{{L^2}}$$
the {\it monomial} of $f$ of index $\ba$, for the scalar product
\begin{equation}\label{exp}
\scal{f}{e_a}_{{L^2}}:=\int_{T^n} f(x) e_a(x) dx, \quad e_a(x):=e^{-2\pi{\bf i}\scal{a}{x}_{\R^n}}.
\end{equation}
Note that\hd{,} since $\ba \in c_0(\Z^n)$, \hd{the monomial} $m_{\ba}(f)$ is the product of finitely many coefficients. The {\it algebraic degree} \hd{of $\ba$} is $d_{\ba}:=\#\ba$, \hd{where $\#\ba$ is} the cardinality of $\ba$, i.e. the number of terms in the product defining $m_{\ba}(f)$.
The {\it harmonic degree} \hd{of $\ba$} is the integer $\delta_{\ba}:=\max_{a\in\ba} \max_{i=1,\ldots,n}|a_i|$.

\begin{example}\label{exmom}
	Let $n=2$.
	The empty set $\ba=\{\} = \emptyset$ indexes the mass $\int_B d\mu(f)$, $\ba=\{(0,0)\}$ indexes the first degree moment $\int_B \scal{f}{e_{(0,0)}}_{{L^2}} d\mu(f)$, $\ba=\{(0,0),(0,0)\}$ indexes the second degree moment $\int_B \scal{f}{e_{(0,0)}}^2_{{L^2}} d\mu(f)$, $\ba=\{(1,0),(0,-1),(0,-1)\}$ indexes the third degree moment  $\int_B \scal{f}{e_{(1,0)}}_{{L^2}} \scal{f}{e_{(0,-1)}}^2_{{L^2}} d\mu(f)$, etc.
\end{example}

\hd{
	Given an index set $A {\:\in\:} c_0(\Z^n)^N$, the problem addressed in this paper is the membership oracle for the Sobolev moment cone
	\begin{equation}\label{smc}
	C(A) := \{(y_{\ba})_{\ba \in A} : y_{\ba} = \int_B m_{\ba}(f)\,d\mu(f) \:\text{for some}\: \mu\:\text{supported on}\:B\} \subset \C^N.
	\end{equation}
	
	{\bf Sobolev moment problem}: {Let $N$ be a given positive integer}. Given an index set $A \in c_0(\Z^n)^N$, does a given vector $(y_{\ba})_{\ba \in A} \in \C^N$ belong to the Sobolev moment cone $C(A)$  ?
	
	Solving the Sobolev moment problem amounts to finding a measure $\mu$ supported on $B$ such that 
	\begin{equation}\label{mp}
	y_{\ba} = \int_B m_{\ba}(f) ~ d\mu(f), \:\text{for all}\: \ba \in A.
	\end{equation}
}

Since we defined a monomial \hd{of a function $f$} in Sobolev space, we can also define a {\it polynomial} $p$ \hd{of $f$} as a linear combination of monomials:
\[
p(f) := 
\sum_{\ba \in \text{spt}\:p} p_{\ba} m_{\ba}(f)
\]
where the support \hd{of $p$, denoted by} $\text{spt}\:p \subset c_0(\Z^n)$, is the \hd{the set of all indices of the monomials of $f$ appearing in p with nonzero coefficients}, the algebraic degree \hd{of $p$} is $d(p):=\max_{\ba \in \text{spt}\:p} d_{\ba}$ and the harmonic degree \hd{of $p$} is $\delta(p):=\max_{\ba \in \text{spt}(p)}\delta_{\ba}$.

\section{Fourier embedding}\label{sec:fourier}

Let us reformulate our moment problem in the space of Fourier coefficients.
The following results are classical \cite{fournier2003}.

Define the Fourier transform $F:L^2(T^n) \to \ell_2(\Z^n), \:\: f \mapsto c$ where $c:=(c_a)_{a\in\Z^n}$ and $c_a:=\scal{f}{e_a}_{{L^2}}$ is the Fourier coefficient of index $a \in \Z^n$ of $f$\hd{, where $e_a$ was defined in \eqref{exp}.}
The adjoint of $F$ is the inverse Fourier transform $F^*:\ell_2(\Z^n) \to L^2(T^n), \:\: c \mapsto f =
\scal{c}{e}_{\ell^2} =   \sum_{a\in\Z^n} c_a e_{-a}$
where $e:=(e_a)_{a\in \Z^n}$.

\begin{proposition}
	The space $H^m(T^n)$ admits an equivalent norm in terms of the Fourier basis: \hd{for any $f \in H^m(T^n)$ and $c=(c_a)_{a\in \Z^n}$ it holds}
	$\|f\|^2_{H^m(T^n)} =  \sum_{a \in \Z^n} w_a {|c_a|}^2$, 
	$c=Ff$ and $w_a:=(1+\scal{a}{a}_{\hd{\ell_2}})^m$, $a \in \Z^n$.
\end{proposition}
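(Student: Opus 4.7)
The plan is to use the orthonormality of the family $\{e_{-a}\}_{a\in\Z^n}$ in $L^2(T^n)$ together with Parseval's identity to rewrite each term $\int_{T^n}|D^\alpha f(x)|^2\,dx$ appearing in $\|f\|^2_{H^m(T^n)}$ as a weighted $\ell^2$ sum in the Fourier coefficients $c_a$, and then to check that the resulting weight is equivalent to $w_a = (1+\langle a,a\rangle_{\ell_2})^m$.

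First I would expand $f = \sum_{a\in\Z^n} c_a\, e_{-a}$ with convergence in $L^2(T^n)$. Since $D^\alpha e_{-a}(x) = (2\pi i)^{|\alpha|}\, a^\alpha\, e_{-a}(x)$ with the multi-index convention $a^\alpha := \prod_{i=1}^n a_i^{\alpha_i}$, a standard integration by parts on the torus against $e_{-a}$ identifies the Fourier coefficients of the weak derivative $D^\alpha f$ as $(2\pi i)^{|\alpha|} a^\alpha c_a$ for every multi-index $\alpha$ with $|\alpha|\leq m$. Applying Parseval's identity to each $D^\alpha f$ then yields
$$\int_{T^n}|D^\alpha f(x)|^2\,dx \;=\; (2\pi)^{2|\alpha|}\sum_{a\in\Z^n} a^{2\alpha}\,|c_a|^2,$$
with $a^{2\alpha} = \prod_i a_i^{2\alpha_i}$. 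Summing the finite outer sum over $|\alpha|\leq m$ and interchanging with the non-negative series gives
$$\|f\|^2_{H^m(T^n)} \;=\; \sum_{a\in\Z^n} W(a)\,|c_a|^2, \qquad W(a) \;:=\; \sum_{|\alpha|\leq m} (2\pi)^{2|\alpha|}\, a^{2\alpha}.$$

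It remains to exhibit constants $0<c_1\leq c_2<\infty$ such that $c_1\,w_a \leq W(a) \leq c_2\,w_a$ for every $a\in\Z^n$, which is exactly what is required for the norm $\sqrt{\sum_a w_a|c_a|^2}$ to be equivalent to $\|\cdot\|_{H^m(T^n)}$. Expanding $w_a$ by the multinomial theorem shows that both $W(a)$ and $w_a$ are strictly positive linear combinations of the monomials $\{a^{2\beta}:|\beta|\leq m\}$; both are polynomials in $a\in\R^n$ that are everywhere strictly positive, equal to $1$ at $a=0$, and of exact order $\|a\|^{2m}$ at infinity. A homogeneity and compactness argument on the Euclidean unit sphere in $\R^n$ then yields the desired two-sided bound on $\R^n$, which restricts to $\Z^n$. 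The only step that really requires attention is the justification of the termwise differentiation formula for a general $f\in H^m(T^n)$: this is the classical Fourier characterization of Sobolev spaces on the torus and can be obtained either by density of trigonometric polynomials in $H^m(T^n)$ or by testing the weak derivative against the basis functions $e_{-a}$. The weight comparison itself is then purely combinatorial.
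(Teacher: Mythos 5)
Your argument is correct and is the standard classical one; the paper itself offers no proof of this proposition, merely citing Adams--Fournier, so there is nothing in the source to compare against, and your write-up fills that gap faithfully. Two small remarks. First, you rightly read the displayed ``equality'' as an assertion of norm equivalence: with the norm literally defined as $\sum_{|\alpha|\le m}\int_{T^n}|D^\alpha f|^2$, the exact Fourier weight is your $W(a)=\sum_{|\alpha|\le m}(2\pi)^{2|\alpha|}a^{2\alpha}$, which agrees with $(1+\langle a,a\rangle)^m$ only up to two-sided constants; the paper is implicitly replacing the norm by the equivalent one. Second, your final homogeneity-and-compactness step on the unit sphere is unnecessary: since $W(a)=\sum_{|\beta|\le m}u_\beta a^{2\beta}$ and $w_a=\sum_{|\beta|\le m}v_\beta a^{2\beta}$ are linear combinations of the \emph{same} nonnegative monomials with strictly positive coefficients ($u_\beta=(2\pi)^{2|\beta|}$, $v_\beta$ the multinomial coefficients of $(1+\sum_i a_i^2)^m$), one gets $c_1 w_a\le W(a)\le c_2 w_a$ directly with $c_1=\min_\beta u_\beta/v_\beta$ and $c_2=\max_\beta u_\beta/v_\beta$. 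The identification of the Fourier coefficients of the weak derivatives via testing against the basis functions, which you flag as the only delicate point, is indeed the crux and is handled exactly as you describe.
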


Define the diagonal operator 
{$W: \ell_2(\Z^n) \to \ell_2(\Z^n), \:\: (c_a)_{a\in\Z^n} \mapsto (c_a/\sqrt{w_a})_{a\in\Z^n}$.}

{
	\begin{proposition}\label{isomorphic} 
		The map $F^*W$ gives a topological isomorphism between $\ell^2(\Z^n)$ and $H^m(T^n)$. We have
		$\|F^*Wc\|^2_{H^m(T^n)} = \sum_{a \in \Z^n} w_a (|c_a|/\sqrt{w_a})^2 = \|c\|_{\ell_2(\Z^n)}$
		for any $c \in \ell_2(\Z^n)$. This yields
		$H^m(T^n) = \{F^*Wc : c \in \ell_2(\Z^n) \}.$
	\end{proposition}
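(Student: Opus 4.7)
The strategy is to combine two ingredients: Parseval's theorem, which says $F$ is a unitary isomorphism $L^2(T^n) \to \ell^2(\Z^n)$ with $F^*$ as its inverse; and the previous proposition, which expresses the $H^m$-norm as the weighted $\ell^2$-norm $\sum_a w_a|c_a|^2$ of the Fourier coefficients. The map $F^*W$ is then, up to a diagonal rescaling $c_a \mapsto c_a/\sqrt{w_a}$ on the Fourier side, essentially the inverse Fourier transform, so it should transport the flat $\ell^2$-norm into the $H^m$-norm isometrically.

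First, I would verify that $F^*W$ is well-defined as a map into $L^2(T^n)$. For $c \in \ell^2(\Z^n)$, since $w_a \geq 1$, the sequence $Wc = (c_a/\sqrt{w_a})_a$ again lies in $\ell^2(\Z^n)$, so $F^*Wc \in L^2(T^n)$ by Parseval. Moreover, the Fourier coefficients of $F^*Wc$ are precisely $Wc$, because $FF^* = I$ on $\ell^2(\Z^n)$.

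Next, I would apply the preceding proposition to $f = F^*Wc$, whose Fourier coefficients are $Wc$. This yields $\|F^*Wc\|^2_{H^m(T^n)} = \sum_a w_a\,|c_a/\sqrt{w_a}|^2 = \sum_a |c_a|^2 = \|c\|^2_{\ell^2(\Z^n)}$, which simultaneously shows that the image lies in $H^m(T^n)$, that $F^*W$ is continuous, and that it is an isometry (hence injective). For surjectivity, given any $f \in H^m(T^n)$ with Fourier sequence $\tilde c := Ff$, the weighted summability $\sum_a w_a|\tilde c_a|^2 < \infty$ guaranteed by the previous proposition tells us that $c := (\sqrt{w_a}\,\tilde c_a)_a$ lies in $\ell^2(\Z^n)$ and satisfies $Wc = \tilde c$, whence $F^*Wc = F^*Ff = f$. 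The isometric bijection $F^*W$ is then automatically a topological isomorphism, and the set-theoretic identity $H^m(T^n) = \{F^*Wc : c \in \ell^2(\Z^n)\}$ is simply surjectivity rewritten.

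I do not expect any substantial obstacle: the proposition is essentially a repackaging of the preceding one via Parseval. The only mild care is bookkeeping the order of $F^*$ and $W$, and the fact that $W$ acts on the Fourier side with weight $1/\sqrt{w_a}$, so that squaring against the weight $w_a$ in the $H^m$-norm exactly cancels it.
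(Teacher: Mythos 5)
Your proof is correct and is exactly the standard argument: the paper itself offers no proof of this proposition, presenting it as a classical consequence of the Fourier characterization of the $H^m$-norm, which is precisely the route you take (Parseval plus the weighted-$\ell^2$ norm identity, with the diagonal weight $1/\sqrt{w_a}$ cancelling against $w_a$). The only point worth flagging is that concluding $F^*Wc \in H^m(T^n)$ from the norm computation uses the converse direction of the preceding proposition --- finiteness of $\sum_{a} w_a |(Wc)_a|^2$ implies membership in $H^m(T^n)$ --- which is the two-sided classical characterization the paper implicitly invokes; a one-line completeness argument (partial sums of $\sum_a (Wc)_a e_{-a}$ are Cauchy in $H^m(T^n)$) closes this if one insists on using only the stated direction.
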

}

From this it follows that for any $f \in H^m(T^n)$ there exists a unique $c \in \ell_2(\Z^n)$ such that $f = F^* W c$. Now define the set of Fourier coefficients
$$
E := \{Ff : f \in B\} {=FB} \subset \ell_2(\Z^n).
$$

\begin{proposition}\label{ecompact}
	$E$ is compact \hd{in $\ell_2(Z^n)$}. Moreover $B$ is one to one to $E$ and in particular $B = {F^*E = } \{F^* c : c \in E\}$.
\end{proposition}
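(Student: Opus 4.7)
The plan is to translate both $B$ and $E$ through the Fourier isomorphism of Proposition \ref{isomorphic}, and then reduce the compactness of $E$ to the compactness of the diagonal operator $W$ on $\ell_2(\Z^n)$.

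First, I would make $E$ explicit. By Proposition \ref{isomorphic}, every $f \in B$ admits a unique representation $f = F^*Wc$ with $c \in \ell_2(\Z^n)$, and the constraint $\|f\|_{H^m(T^n)} \leq 1$ is equivalent to $\|c\|_{\ell_2(\Z^n)} \leq 1$. Applying $F$ and using $FF^* = I$ on $L^2(T^n)$ (Plancherel), one has $Ff = Wc$, so
\[
E = FB = \{Wc : c \in \ell_2(\Z^n),\ \|c\|_{\ell_2(\Z^n)} \leq 1\}
\]
is the image by $W$ of the closed unit ball of $\ell_2(\Z^n)$.

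Second, to show $E$ is compact, I would establish that $W$ is itself a compact operator on $\ell_2(\Z^n)$. Since $W$ is diagonal with entries $1/\sqrt{w_a} = (1+\langle a,a\rangle)^{-m/2}$ which tend to $0$ as $|a|\to\infty$ (using $m\geq 1$), it is the operator-norm limit of its finite-rank truncations to sets $\{a : \max_i|a_i| \leq N\}$, hence compact. Because the closed unit ball of $\ell_2(\Z^n)$ is weakly compact (Banach--Alaoglu) and compact operators send weakly convergent sequences to norm-convergent ones, its image under $W$ is norm-compact in $\ell_2(\Z^n)$.

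Third, for the bijection $B \leftrightarrow E$ and the identity $B = F^*E$, I would invoke that $F : L^2(T^n) \to \ell_2(\Z^n)$ is a unitary isomorphism (Plancherel), so injective on $B \subset L^2(T^n)$; applying $F^*$ to $E = FB$ and using $F^*F = I_{L^2(T^n)}$ yields $B = F^*E$. The only substantive step is the compactness of $W$; this is the Fourier-analytic transcription of the compact Sobolev embedding $H^m(T^n) \hookrightarrow L^2(T^n)$, made transparent here because the Fourier basis diagonalises the embedding and the decay of $1/\sqrt{w_a}$ is explicit.
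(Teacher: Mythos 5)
Your proof is correct and follows essentially the same route as the paper: compute $E = FB = WQ$ with $Q$ the closed unit ball of $\ell_2(\Z^n)$ via Proposition \ref{isomorphic}, observe that the diagonal operator $W$ is compact because $1/\sqrt{w_a}\to 0$, and deduce compactness of $E$; the bijection and $B=F^*E$ follow from unitarity of $F$. In fact your argument is slightly more careful on one point: the paper only invokes that a compact operator maps a bounded set to a \emph{relatively} compact set, which by itself gives that $E$ has compact closure rather than that $E$ is compact, whereas you close this gap by combining weak compactness of $Q$ (Banach--Alaoglu in a Hilbert space) with the complete continuity of compact operators, so that $WQ$ is itself norm-compact. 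The only cosmetic slip is writing ``$FF^*=I$ on $L^2(T^n)$'' where you mean the identity on $\ell_2(\Z^n)$; the substance is unaffected.
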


\begin{proof}
	
	$E$ is the linear image of the closed unit ball $B$ of $H^m(T^n)$ via $F$.  By \hd{Proposition \ref{isomorphic}} and the definition of $E$ we have
	{
		$ E= FB = \{F f : \|f\|_{H^m(T^n)} \leq 1 \} = \{F F^*Wc : \|F^*W c\|_{H^m(T^n)} \leq 1, c \in \ell_2(\Z^n)\} = \{Wc : \|c\|_{\ell_2(T^n)} \leq 1\}$. 
	}
	The compactness of $E$ is obtained by noting that $E = W Q$\hd{,} where $Q$ is the \hd{closed} unit ball of $\ell_2(\Z^n)$ and $W$ is a compact operator from $\ell^2(\Z^n)$ to itself, since it is a diagonal operator with diagonal $1/\sqrt{w_a} \to 0$ as $|a| \to \infty$. Indeed\hd{,} by definition\hd{,} a compact operator between normed spaces maps a bounded set in a relatively compact space (i.e. a set that has a compact closure), and here we are mapping $B$ that is the closure of the unit ball of $\ell_2(\Z^n)$.
\end{proof}

{Geometrically, $E$ is an ellipsoid contained in a Hilbert box with vanishing axes:
	$$E = \{c \in \ell_2(\Z^n): \sum_{a \in \Z^n} w_a c_a^2 < \infty \} \subset \{c \in \ell_2(\Z^n): |c_a| \leq w_a^{-1/2}, \forall~a\in \Z^n\}.$$
}

Let
\begin{equation}\label{pushforward}
\nu := {F_{\#}\mu}
\end{equation}
denote the pushforward measure of $\mu$ through \hd{$F$}. 
For any $\ba \in c_0(\Z^n)$, the moment $y_{\ba}$ of $\mu$ can then be expressed as a moment of $\nu$ in the space of Fourier coefficients:
\[
y_{\ba} = \int_B m_{\ba}(f) \, d\mu(f) = \int_{E} c^{\ba} ~ d\nu(c), \qquad c^{\ba} := \prod_{a \in \ba} c_a.
\]
\hd{The Sobolev moment problem can therefore be stated as a membership oracle in the Fourier moment cone
	\begin{equation}\label{fmc}
	\{(y_{\ba})_{\ba \in A} : y_{\ba} = \int_E c^{\ba}\,d\nu(c) \:\text{for some {measure} }\: \nu\:\text{supported on}\:E\} \subset \C^N.
	\end{equation}
	It consists of finding a measure $\nu$ supported on $E$ such that
	\begin{equation}\label{mpe}
	y_{\ba} = \int_{E} c^{\ba} ~ d\nu(c), \:\text{for all}\: \ba \in A.
	\end{equation}
}

\begin{proposition}\label{prop:mpe}
	There is a solution to the Sobolev moment problem \eqref{mp} on the ball $B$  if and only if there is a solution to the Fourier moment problem \hd{\eqref{mpe}}
	on the ellipsoid $E$.
\end{proposition}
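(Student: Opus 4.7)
The plan is to transport measures between $B$ and $E$ by pushforward under the Fourier transform, exploiting the bijection $F|_B \colon B \to E$ (with inverse $F^*|_E \colon E \to B$) guaranteed by Proposition \ref{ecompact}. The algebraic backbone is the pointwise identity
\[
c^{\ba} \;=\; \prod_{a\in\ba} c_a \;=\; \prod_{a\in\ba} \scal{f}{e_a}_{L^2} \;=\; m_{\ba}(f) \qquad \text{whenever } c = Ff,
\]
which is immediate from the definitions of $F$ and of the Sobolev monomial.

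For the forward direction I would take $\nu := F_{\#}\mu$, as already set up in \eqref{pushforward}, and invoke the change of variables formula for pushforwards:
\[
\int_E c^{\ba}\,d\nu(c) \;=\; \int_B (Ff)^{\ba}\,d\mu(f) \;=\; \int_B m_{\ba}(f)\,d\mu(f) \;=\; y_{\ba}
\]
for every $\ba \in A$, so $\nu$ solves \eqref{mpe}. Symmetrically, for the converse I would set $\mu := (F^*|_E)_{\#}\nu$; since $F^*|_E$ is the two-sided inverse of $F|_B$ by Proposition \ref{ecompact}, this $\mu$ is supported on $B$, and running the same change of variables formula the other way recovers \eqref{mp}.

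The main point to check, and the mildest of obstacles, is that each pushforward is a bona fide Radon measure on the ambient space. Since $F$ and $F^*$ are continuous between the separable Hilbert spaces $L^2(T^n)$ and $\ell_2(\Z^n)$, the restrictions $F|_B$ and $F^*|_E$ are homeomorphisms of metrizable spaces, so pushforwards of Borel measures are Borel and total mass is preserved. Tightness comes for free from compactness of $E$ in $\ell_2(\Z^n)$ (Proposition \ref{ecompact}): any finite Borel measure on $E$ is automatically Radon, and pulling it back through the continuous inverse $F^*|_E$ yields a Radon measure supported on $B$. Once this measure-theoretic bookkeeping is in place, the change of variables identity closes both implications.
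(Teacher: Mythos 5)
Your proof is correct and follows essentially the same route as the paper: pushforward by $F$ for the forward direction, pushforward by $F^*$ (using $B=F^*E$ from Proposition \ref{ecompact} and the identity $\langle F^*c, e_a\rangle_{L^2}=c_a$) for the converse, together with the change of variables formula. The extra paragraph verifying that the pushforwards are genuine Radon measures is a detail the paper leaves implicit, but it does not change the argument.
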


\begin{proof}
	{
		If $\mu$ is such that \eqref{mp} holds, then $F_{\#}\mu$ fulfills \eqref{mpe} by the definition \eqref {pushforward} of pushforward measure. Conversely, suppose that $\nu$ is such that \eqref{mpe} holds and observe that $B=F^*E$ by Proposition \ref{ecompact}. Therefore, using the orthonormality of the complex exponentials $(e_a)_a$ and the change of variables formula, we get: $$\int_E \prod_a c_a d\nu=\int_E \prod_a\langle F^*c, e_a\rangle d\nu=\int_B \prod_a\langle f, e_a\rangle dF^*_{\#}\nu$$
		i.e. $F^*_{\#}\nu$ fulfills \eqref{mp}.}
	
\end{proof}

\section{Semidefinite approximations of the Sobolev moment cone}\label{sec:approx}

Given an index set $A {\:\in\:} c_0(\Z^n)^N$, \hd{the Sobolev moment problem is the membership oracle in the Sobolev moment cone $C(A)$ defined in \eqref{smc}.}
Despite being convex and finite-dimensional, the cone $C(A)$
is difficult to manipulate directly. 
It must be approximated by linear sections and projections of a finite-dimensional convex cone on which optimization can be carried out efficiently, namely the semidefinite cone \hd{i.e. the set of non-negative quadratic forms.}

\subsection{Outer approximations}

We can construct outer approximations, or relaxations of $C(A)$, by projecting finite-dimensional spectrahedra (i.e. linear sections of the semidefinite cone) of increasing size. Let $\Pi_A : \ell_2(\Z^n) \to \C^N, y \hd{\:=\:(y_{\ba})_{\ba \in \Z^n}} \mapsto (y_{\ba})_{\ba\in A}$ denote the projection map onto the subspace indexed by $A$.
\hd{By Proposition \ref{prop:mpe} we can} express the Sobolev moment cone \hd{\eqref{smc}} as the Fourier moment cone \hd{\eqref{fmc}}.
Let $\C[c]$ denote the {ring} of complex polynomials \hd{in} the indeterminate {$c=(c_a)_{a\in \Z^n}$, i.e. polynomials depending on infinitely countably many variables.}
\hd{As described at the end of Section \ref{sec:mom},}
elements of $\C[c]$ can be expressed as linear combinations of monomials $p(c) = 
\sum_{\ba \in \text{spt}(p)} p_{\ba} c^{\ba}$
with algebraic degree  $d(p):=\max_{\ba \in \text{spt}(p)} d_{\ba}$ and harmonic degree $\delta(p):=\max_{\ba \in \text{spt}(p)}\delta_{\ba}$.
We are particularly interested in {\it Hermitian polynomials}, i.e. elements of $\C[c]$ with values in $\R$.
{An example of a Hermitian polynomial depending on infinitely many variables is the squared norm $\|c\|^2_W:=\sum_{a \in \Z^n} w_a |c_a|^2$ defined in Proposition \ref{isomorphic}. Its harmonic degree is infinite and its algebraic degree is two, as each monomial has the form $|c_a|^2=c_a c_{-a}$, $a \in \Z^n$.}

Given $r,\rho \in \N$,
let $\C[c]_{r,\rho}:=\{p \in \C[c] : d(p)\leq r, \delta(p)\leq \rho\}$ and define the cone of Hermitian polynomial sums of squares $$\Sigma_{r,\rho}:=\left\{ \sum_k q^*_kq_k : q_k \in \C[c]_{r,\rho}\right\}$$ and
the {truncated} quadratic module $${Q_{r,\rho}:=\left\{s_0+s_1(1-\sum_{a \in \Z^n, \delta_a \leq \rho} w_a |c_a|^2) : s_0 \in \Sigma_{r,\rho}, s_1 \in \Sigma_{r-1,\rho}\right\}\subset \C[c]_{2r,\rho}}.$$ 
Given a sequence $y=(y_{\ba})_{\ba \in \Z^n} \in \ell_2(\Z^n)$, define the linear functional $\ell_y : \C[c] \to \C,\:\: p(c):=\sum_{\ba} p_{\ba} c^{\ba} \mapsto \ell_y(p):=\sum_{\ba} p_{\ba} y_{\ba}$.
Let $d_A:= \max_{\ba \in A} d_{\ba}$ denote the algebraic degree of A, and let
$\rho_A:=\max_{\ba \in A} \delta_{\ba}$ denote the harmonic degree of A.
Finally, define the following cone
\[
C^{\text{out}}_{r,\rho}(A) := \Pi_A\left\{y \hd{\:\in\:\ell_2(\Z^n)} : \ell_y(p) \geq 0 \:\text{for all}\: p \in Q_{r,\rho}\right\}.
\]

\begin{proposition}\label{out}
	For any $r \geq d_A$ and $\rho \geq \rho_A$,
	$C^{\mathrm{out}}_{r,\rho}(A)$ is a semidefinite representable\footnote{\hd{A set is semidefinite representable if it can be expressed as a projection of a spectrahedron. A spectrahedron is an affine section of the semidefinite cone.}} outer approximation of $C(A)$.
\end{proposition}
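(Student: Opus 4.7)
The goal is to verify two things: that $C^{\mathrm{out}}_{r,\rho}(A) \supseteq C(A)$ (outer approximation), and that $C^{\mathrm{out}}_{r,\rho}(A)$ is semidefinite representable. Throughout, I will work in Fourier coordinates via Proposition \ref{prop:mpe}, which replaces any measure $\mu$ on $B$ by its pushforward $\nu$ on $E$ without changing the moments of interest and makes the SOS arithmetic cleaner.

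For the inclusion $C(A) \subseteq C^{\mathrm{out}}_{r,\rho}(A)$, I take $y \in C(A)$ realized by some measure $\nu$ on $E$, and extend it to a full sequence $\tilde y$ by setting $\tilde y_{\ba} := \int_E c^{\ba}\,d\nu(c)$ for every multi-index $\ba$ with $d_{\ba} \leq 2r$ and $\delta_{\ba} \leq \rho$; each such integral is finite because $E$ is compact in $\ell_2(\Z^n)$ by Proposition \ref{ecompact} and $c^{\ba}$ depends on only finitely many coordinates. Under the standing assumptions $r \geq d_A$ and $\rho \geq \rho_A$, all indices of $A$ lie in this range, so $\Pi_A \tilde y = y$. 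It remains to check $\ell_{\tilde y}(p) \geq 0$ for every $p \in Q_{r,\rho}$. Writing $p = s_0 + s_1 g$ with $g := 1 - \sum_{a \in \Z^n,\,\delta_a \leq \rho} w_a |c_a|^2$ and $s_0, s_1$ Hermitian sums of squares, the terms $s_0, s_1$ are pointwise non-negative, and the key observation is that on $E$ one has $\sum_{a \in \Z^n} w_a |c_a|^2 \leq 1$ (this follows from the explicit description $E = WQ$ obtained in Proposition \ref{ecompact}), so dropping the non-negative high-frequency tail still leaves $g \geq 0$ on $E$. Hence $p \geq 0$ on $E$ and $\ell_{\tilde y}(p) = \int_E p\,d\nu \geq 0$, placing $y$ in $C^{\mathrm{out}}_{r,\rho}(A)$.

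For semidefinite representability, I will invoke the standard equivalence between positivity of $\ell_y$ on a truncated quadratic module and positive semidefiniteness of associated Hermitian moment and localizing matrices. The requirement $\ell_y(q^* q) \geq 0$ for every $q \in \C[c]_{r,\rho}$ is equivalent to positive semidefiniteness of the Hermitian moment matrix indexed by monomials of bidegree $(r,\rho)$, whose entries depend linearly on the components $y_{\ba}$ with $d_{\ba} \leq 2r,\, \delta_{\ba} \leq \rho$. Similarly $\ell_y(q^* q\, g) \geq 0$ for every $q \in \C[c]_{r-1,\rho}$ is equivalent to positive semidefiniteness of a localizing matrix $M_{r-1,\rho}(g\,y)$ indexed by monomials of bidegree $(r-1,\rho)$, also linear in the same finite block of $y$. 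These two linear matrix inequalities cut out a spectrahedron in a finite-dimensional ambient space, and $C^{\mathrm{out}}_{r,\rho}(A)$ is its image under the linear projection $\Pi_A$, hence semidefinite representable by definition.

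The one genuinely non-routine point is the step verifying that the truncated localizer $g$ is non-negative on $E$: the true defining inequality of $E$ is the infinite sum $\sum_{a \in \Z^n} w_a |c_a|^2 \leq 1$, and $g$ is obtained by discarding the tail $\sum_{\delta_a > \rho} w_a |c_a|^2$; this preserves non-negativity only because the discarded terms are themselves non-negative, which is peculiar to the ellipsoidal form of the constraint defining $E$. Once this observation is in place, the remainder of the proof is a faithful transcription of the finite-dimensional Lasserre outer-relaxation argument, with the harmonic-degree truncation $\rho$ neutralizing the infinite dimensionality of the underlying Fourier space.
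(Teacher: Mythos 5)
Your proof is correct and follows essentially the same route as the paper's: realize $y \in C(A)$ by a measure on $E$ and observe that every element of $Q_{r,\rho}$ is non-negative on $E$, then obtain semidefinite representability from the linear-in-$y$ Hermitian moment and localizing matrices projected by $\Pi_A$. You in fact supply two details the paper leaves implicit --- that the truncated localizer $1-\sum_{\delta_a\leq\rho} w_a|c_a|^2$ remains non-negative on $E$ because the discarded tail is itself non-negative, and that $y$ must be extended to a full moment sequence $\tilde y$ with $\Pi_A\tilde y=y$ before applying the definition of $C^{\mathrm{out}}_{r,\rho}(A)$ --- which only strengthens the argument.
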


\begin{proof}
	To prove the outer approximation claim, let us \hd{take} a vector $y \in C(A)$ and prove that $y \in C^{\text{out}}_{r,\rho}(A)$. Since $y \in C(A)$, \hd{by} Proposition \ref{prop:mpe}\hd{,} there exists a measure $\nu$ supported on $E$ such that $y_{\ba} = \int_E c^{\ba} d\nu(c)$. In particular, for any real valued polynomial $p(c) = \sum_{\ba} p_{\ba} c^{\ba}$ which is non-negative on $E$, \hd{the} vector $y$ is such that $\ell_y(p) = \sum_{\ba} p_{\ba} y_{\ba} = \sum_{\ba} p_{\ba} \int_E c^{\ba}d\nu(c) = \int_E p(c) d\nu(c)$ is nonnegative. In particular this holds for polynomials {in $Q_{r,\rho}$} and hence $y \in C^{\mathrm{out}}_{r,\rho}(A)$.
	
	To prove the semidefinite representability claim, observe that the quadratic form $\C[c] \to \R, q \mapsto \ell_y(q^*q)$ can be expressed as a Hermitian matrix linear in $y$. Non-negativity of the quadratic form is therefore equivalent to positive semidefinitess of a matrix which is linear in $y$, i.e. \hd{equivalent to} a linear matrix inequality. Testing non-negativity of 
	$\ell_y(p)$ for all $p \in Q_{r,\rho}$
	amounts to testing non-negativity of $q \mapsto \ell_y(q^*q)$ {for all $q \in \C[c]_{r,\rho}$} and $q \mapsto \ell_y({(1-\sum_{a \in \Z^n, \delta_a \leq \rho} w_a |c_a|^2)}q^*q)$ {for all $q \in \C[c]_{r-1,\rho}$}. These quadratic forms are finite dimensional, so it follows that testing membership in $C^{\mathrm{out}}_{r,\rho}(A)$ amounts to testing membership in the projection of a \hd{spectrahedral cone}, \hd{i.e.} a finite-dimensional linear slice of the semidefinite cone.
\end{proof}

\begin{proposition}\label{outdense}
	$\overline{C^{\mathrm{out}}_{\infty,\infty}}(A)=C(A)$.
\end{proposition}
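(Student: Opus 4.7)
The easy inclusion $C(A) \subseteq \overline{C^{\text{out}}_{\infty,\infty}}(A)$ follows from Proposition \ref{out}, since $C(A) \subseteq C^{\text{out}}_{r,\rho}(A)$ for every admissible $(r,\rho)$ and hence is contained in their intersection $C^{\text{out}}_{\infty,\infty}(A)$. For the reverse inclusion, my plan is to prove separately that (a) $C(A)$ is closed in $\C^N$ and (b) $C^{\text{out}}_{\infty,\infty}(A) \subseteq C(A)$, which together yield $\overline{C^{\text{out}}_{\infty,\infty}}(A) \subseteq \overline{C(A)} = C(A)$.

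Step (a) exploits the compactness of $E$ in $\ell_2(\Z^n)$ from Proposition \ref{ecompact} and the fact that every monomial $c \mapsto c^{\ba}$ is norm-continuous on $E$ and satisfies $|c^{\ba}| \leq \prod_{a \in \ba} w_a^{-1/2}$. Given a convergent sequence $y^{(k)} \to y$ in $C(A)$ with representing measures $\nu_k$ supported on $E$, one may assume without loss of generality that $\emptyset \in A$ (since adjoining this index only strengthens the claim), so the masses $\nu_k(E) = y_\emptyset^{(k)}$ are uniformly bounded. Banach--Alaoglu in $C(E)^*$ yields a weak-$*$ subsequential limit $\nu$ on $E$, and continuity of monomials allows passing to the limit in $y^{(k)}_\ba = \int c^{\ba}\,d\nu_k$, showing $y \in C(A)$.

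Step (b) proceeds by a two-stage limit. For $y \in C^{\text{out}}_{\infty,\infty}(A)$ and each admissible $(r,\rho)$, select a lift $\tilde y^{(r,\rho)}$ with $\Pi_A \tilde y^{(r,\rho)} = y$ and $\ell_{\tilde y^{(r,\rho)}}(p) \geq 0$ for all $p \in Q_{r,\rho}$. Positive semidefiniteness of the moment matrix together with the localizing constraint coming from $1 - \sum_{\delta_a \leq \rho} w_a |c_a|^2$ yields, via iterated Cauchy--Schwarz, uniform componentwise bounds of the form $|\tilde y^{(r,\rho)}_\ba| \leq \tilde y^{(r,\rho)}_\emptyset \prod_{a \in \ba} w_a^{-1/2}$. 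At fixed $\rho \geq \rho_A$, a diagonal extraction over the countably many indices $\ba$ as $r \to \infty$ produces a limit sequence $\tilde y^{(\rho)}$ for which $\ell_{\tilde y^{(\rho)}}$ is non-negative on the full, untruncated quadratic module in the variables $\{c_a : \delta_a \leq \rho\}$. Since the ellipsoid $E_\rho := \{c : \sum_{\delta_a \leq \rho} w_a |c_a|^2 \leq 1\}$, viewed in the finite-dimensional coordinate space indexed by $\{a : \delta_a \leq \rho\}$, is Archimedean, the finite-dimensional Putinar Positivstellensatz combined with Haviland's theorem delivers a non-negative Radon measure $\nu_\rho$ on $E_\rho$ with $\int c^{\ba}\,d\nu_\rho = y_\ba$ for every $\ba \in A$ satisfying $\delta_\ba \leq \rho$. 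Embedding $E_\rho$ into $E$ by padding with zeros produces measures of uniformly bounded mass on the compact set $E$, and one final Banach--Alaoglu extraction together with the norm-continuity of monomials on $E$ yields a limit measure $\nu$ on $E$ realizing $y$, so $y \in C(A)$.

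The principal technical obstacle will be establishing the uniform componentwise bound on the lifts $\tilde y^{(r,\rho)}$, which is the key ingredient enabling the diagonal extraction; a secondary subtlety is verifying that the extracted diagonal limit remains non-negative on the full untruncated quadratic module, which is exactly where the Archimedeanity of $E_\rho$ is essential for invoking Putinar--Haviland.
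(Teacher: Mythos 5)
Your argument is correct in its main thrust but follows a genuinely different route from the paper. The paper disposes of the hard inclusion in one stroke by invoking the infinite-dimensional Archimedean Positivstellensatz / moment theorem of Infusino--Kuhlmann--Kuna--Michalski (\cite[Theorem 3.9]{ikkm23}) applied to the quadratic module $Q_{\infty,\infty}=\cup_{r,\rho}Q_{r,\rho}$, which directly produces a representing measure on $E$ for any functional non-negative on $Q_{\infty,\infty}$; the projection through $\Pi_A$ then finishes the proof. You instead re-derive the needed special case by hand: uniform moment bounds from the localizing constraint via iterated Cauchy--Schwarz, a diagonal extraction in $r$, the \emph{finite-dimensional} Putinar--Haviland theorem on the finite-dimensional ellipsoids $E_\rho$, and a final weak-$*$ compactness argument on the compact set $E$ as $\rho\to\infty$. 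This is essentially the projective-limit mechanism underlying the cited theorem, so your proof is more self-contained and avoids the infinite-dimensional moment literature entirely; it is also strictly stronger in one respect, since by allowing the lift $\tilde y^{(r,\rho)}$ to depend on $(r,\rho)$ you handle the intersection $\cap_{r,\rho}C^{\mathrm{out}}_{r,\rho}(A)$ rather than only the single-lift set $\Pi_A\{y:\ell_y(p)\geq 0\ \forall p\in Q_{\infty,\infty}\}$. The price is considerably more bookkeeping (Hermitian versus real structure, ordering of the two limits, checking that the coordinates indexed by $A$ are preserved at each stage -- which works because $\delta_{\ba}\leq\rho_A\leq\rho$ for all $\ba\in A$).

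One step deserves a warning. Your reduction ``assume without loss of generality that $\emptyset\in A$'' is not innocent, and it is load-bearing twice: in Step (a) to bound the masses $\nu_k(E)$, and in Step (b) to bound $\tilde y^{(r,\rho)}_\emptyset$ uniformly so that the componentwise bounds and the diagonal extraction go through. If $\emptyset\notin A$, closedness of $C(A\cup\{\emptyset\})$ does \emph{not} imply closedness of $C(A)$: $C(A)$ is then a coordinate projection of a closed cone, and since the zero sequence $c=0$ lies in $E$ and annihilates every nonempty monomial, the origin lies in the convex hull of the moment variety, which is exactly the situation in which the conical hull of a compact set (and projections of closed cones) can fail to be closed. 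The paper quietly hedges the same point by writing that $C(A)$ is ``the (closure of the)'' projection of the full moment cone, so this is a shared gap rather than one specific to your proof; but if you want your two-step decomposition $\overline{C^{\mathrm{out}}_{\infty,\infty}(A)}\subseteq\overline{C(A)}=C(A)$ to be airtight you must either assume $\emptyset\in A$ outright or replace $C(A)$ by its closure in the statement.
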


\begin{proof}
	{We use an infinite-dimensional extension of the finite-dimensional Putinar's Positivstellensatz -- \hd{see \cite{p93}} and \cite[Thm. 3.20]{l09} or \cite[Thm. 2.14]{l10}. This extension is available in any unital commutative algebra, and in particular in the algebra $A$ generated by elements of $\C[c]$. A quadratic module $Q$ is a (generally infinite-dimensional) cone such that $1 \in Q$, $Q+Q \subset Q$ and $A^2 Q \subset Q$. It is Archimedean if $\forall p \in A$, $\exists N \in \N$ such that $N - p^2 \in Q$. It can be checked that the quadratic module $Q_{\infty,\infty} := \cup_{r,\rho \in \N} Q_{r,p} \subset \C[c]$ associated to the Fourier ellipsoid $E$ is indeed Archimedean.
		In this context, \cite[Theorem 3.9]{ikkm23} states that any linear functional on $A$ which is \hd{non-negative} on $Q_{\infty,\infty}$ has a unique representing measure with compact support in $E$, i.e.
		$\{y \in \ell_2(\Z^n) : \ell_y(p) \geq 0 \text{ for all } p \in Q_{\infty,\infty}\} = \{y \in \ell_2(\Z^n): y_{\ba} = \int c^{\ba} d\nu \text { for some measure $\nu$ supported on $E$ }\}$ is the full moment cone on $E$. This result is the dual to the infinite-dimensional Jacobi's Positivstellensatz of \cite[Theorem 2.1]{gkm14}, originally stated in \cite[Theorem 4]{j01}. The proof is concluded by observing that $C(A)$ is just the (closure of the) finite-dimensional projection through \hd{the} map $\Pi_A$ of the full moment cone.
	}
\end{proof}

\begin{proposition}
	For any $r \geq r_A$, the Hausdorff distance $d_H$ between $C(A)$ and $C^{\mathrm{out}}_{r,{\rho_A}}(A)$ is bounded as follows
	\begin{equation}
	d_H(C(A), C^{\mathrm{out}}_{r,{\rho_A}}(A)) ~\leq~  9\, (2\rho_A + 1)^n\, \frac{r_A^2}{r^2}.
	\end{equation}
\end{proposition}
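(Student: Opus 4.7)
The plan is to reduce the bound to a quantitative convergence rate of the standard Lasserre hierarchy on a finite-dimensional Euclidean ball and then dualise. Since every index in $A$ and every polynomial in $Q_{r,\rho_A}$ involves only the Fourier coefficients $c_a$ with $\delta_a \leq \rho_A$, the whole problem depends on the finite block $c_{\rho_A} := (c_a)_{\delta_a \leq \rho_A} \in \C^D$ with $D = (2\rho_A+1)^n$. After the diagonal change of coordinates $z_a := \sqrt{w_a}\,c_a$, the localising constraint $1 - \sum_{\delta_a \leq \rho_A} w_a |c_a|^2 \geq 0$ becomes the defining inequality of the standard Euclidean unit ball $B_D \subset \C^D$, and $E$ projects onto $B_D$. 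Consequently $C(A)$ is identified with a truncated moment cone of Borel measures on $B_D$, and $C^{\mathrm{out}}_{r,\rho_A}(A)$ with the degree-$r$ Lasserre outer relaxation of that classical finite-dimensional moment problem.

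With this reduction I would invoke a quantitative Putinar's Positivstellensatz on the Euclidean ball (for instance the bounds of Slot or Baldi--Mourrain). Such results yield, for every Hermitian $p$ of algebraic degree at most $2 r_A$ that is nonnegative on $B_D$, a decomposition $p + \varepsilon_r \|p\|_{B_D} \in Q_{r,\rho_A}$ with $\varepsilon_r \leq C\,D\,r_A^2 / r^2$ for an explicit constant $C$. Tracking this constant through the diagonal rescaling $z_a = \sqrt{w_a}\,c_a$ and absorbing the factor $D = (2\rho_A+1)^n$ should produce the leading factor $9\,(2\rho_A+1)^n\,r_A^2/r^2$ appearing in the statement.

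To convert this sup-norm SOS rate into a Hausdorff distance on moment cones I would use Hahn--Banach duality. Fix $y \in C^{\mathrm{out}}_{r,\rho_A}(A)$ and any Hermitian $p$ of degree at most $2r_A$ that is nonnegative on $B_D$. Writing $p + \varepsilon_r \|p\|_{B_D} = q_r \in Q_{r,\rho_A}$ and using $\ell_y(q_r) \geq 0$ gives $\ell_y(p) \geq -\varepsilon_r \|p\|_{B_D}\,\ell_y(1)$. The truncated $K$-moment theorem on the compact set $B_D$ then furnishes an atomic representing measure whose moments match those of $y$ up to error $\varepsilon_r \,\ell_y(1)$, producing a $y' \in C(A)$ at the claimed Hausdorff distance from $y$ once the mass term is normalised.

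The main obstacle is to extract the precise constant $9$ and the exact dimension factor $(2\rho_A+1)^n$ from the available quantitative Putinar bounds: the asymptotic rate $r_A^2 / r^2$ is standard for the Lasserre hierarchy on the ball, but the leading constants depend on which Positivstellensatz one uses and on how carefully one handles the diagonal weights $w_a$, together with the passage from uniform approximation of nonnegative polynomials to a distance on truncated moment cones. I expect the cleanest route to be a direct application of a sharp ball bound to the rescaled variables $z_a$, followed by a simple Hahn--Banach duality on the finite-dimensional section indexed by $A$.
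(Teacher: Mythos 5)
Your finite-dimensional reduction---restricting to the block of Fourier coefficients with $\delta_a\le\rho_A$, of size $N=(2\rho_A+1)^n$, so that the localising constraint $1-\sum_{\delta_a\le\rho_A}w_a|c_a|^2\ge 0$ defines a Euclidean ball after the rescaling $z_a=\sqrt{w_a}\,c_a$---is exactly the first step of the paper's argument, as is the observation that $\Pi_A$ is a contraction so the bound survives the projection onto the coordinates indexed by $A$. After that the routes diverge, and yours has a gap that you yourself flag: the constant $9$ and the \emph{linear} dependence on $N=(2\rho_A+1)^n$ do not fall out of a generic quantitative Putinar bound on the Euclidean ball (the Slot or Baldi--Mourrain constants carry different, and typically much worse, dimension dependence). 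The paper instead invokes Corollary~1 of \cite{br23}, which is already a \emph{moment-side} statement: for every $\widehat y$ in the degree-$r$ outer relaxation there exists an actual representing measure $\mu$ with $\|\Pi^{(r_A)}_r(\widehat y-y(\mu))\|_{\mathrm{Fro}}\le 9Nr_A^2/r^2$. That is the Hausdorff estimate directly; no dualisation is required.

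The more substantive problem is your conversion of a Positivstellensatz rate into a distance between moment cones. From $\ell_y(p)\ge-\varepsilon_r\|p\|_{B_D}\,\ell_y(1)$ for all $p$ nonnegative on $B_D$, you cannot conclude via ``the truncated $K$-moment theorem'' that some measure has moments within $\varepsilon_r\,\ell_y(1)$ of $y$: Curto--Fialkow and Tchakaloff-type results give \emph{exact} representation for elements of the truncated moment cone (or under flatness), not approximate representation for elements of the relaxation. To make the step rigorous you would need a separating-hyperplane argument on the slice $y_\emptyset=1$, and the separating functional is a polynomial whose sup-norm and sign on $B_D$ are not controlled by the hypotheses under which your $\varepsilon_r$ was derived; carrying this out with explicit constants is essentially the content of Corollary~1 of \cite{br23}. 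So your plan has the right shape, but as written neither the constant $9\,(2\rho_A+1)^n$ nor the key duality step is actually established.
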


\begin{proof}
	Any polynomial in $\mathbb{C}[c]_{r,\rho}$ can be written as {$p(c) = \langle w,  \phi(c) \rangle_{\C^K}$, for some  $w \in \C^K$ and with $\phi: \C^N \to \C^K$}, where $N$ is the number of Fourier coefficients up to harmonic degree $\rho$ for functions on the torus $T^n$, i.e. $N = (2\rho + 1)^n$, while $K$ is the number of Chebyshev polynomials up to degree $r$ that we can build on $\C^N$, i.e. $K = \binom{r+N+1}{r}$. This representation allow us to identify $C^{\mathrm{out}}_{r,\rho}(A)$ with $\widehat{\cal K}_s$ with $s = r$ and $C(A)$ with ${\cal K}_s$ and $s = r$ from \cite{br23} and \hd{then} use their Corollary 1 (note that we do not have a scale factor $1/(2r+1)^d$).
	
	Denote by $y(\mu) \in \C^K$ the vector of all the moments of a measure $\mu$ supported on the $N$ Fourier coefficients up to algebraic degree $r$. By applying Corollary 1 of \cite{br23}, we have that for any $\widehat{y} \in C^{\mathrm{out}}_{r,\rho}$ there exists a measure $\mu$ supported on the $N$ Fourier coefficients, such that
	$$
	\|\Pi^{(r_A)}_{r}(\widehat{y} - y(\mu))\|_{\mathrm{Fro}} \leq \frac{9 N r_A^2}{r^2}, 
	$$
	where $\|\cdot\|_{\mathrm{Fro}}$ is the Frobenius norm and where $\Pi^{(r_A)}_{r}$ is a diagonal matrix such that $(\Pi^{(r_A)}_{r})_{a,a} = 1$ iff $a$ is the index of a Chebyshev polynomial with algebraic degree less or equal to $r_A$, otherwise it is $(\Pi^{(r_A)}_{r})_{a,a} = 0$. The proof is concluded by noting that $C(A) = \{ \Pi^{(r_A)}_{r} y(\mu) ~|~ \mu \in M_\rho \}$, where $M_\rho$ is the set of measures supported on Fourier coefficients with maximum harmonic degree $\rho$.
	
	{ By Proposition \ref{out} we have that $C(A) \subseteq C^{\mathrm{out}}_{r,\rho}(A)$, so the definition of Hausdorff distance with respect to the Euclidean norm simplifies to
		$$
		d_H(C(A), C^{\mathrm{out}}_{r,\rho}(A))  = \sup_{\widehat{y} \in C^{\mathrm{out}}_{r,\rho}(A)} \inf_{y \in C(A)} \|y - \widehat{y}\|_{\mathrm{Fro}}.
		$$
		Now note that,  by construction $\Pi_A y = y$ for any $y \in C(A)$ and $\Pi_A \widehat{y} = \widehat{y}$ for any $\widehat{y} \in C^{\mathrm{out}}_{r,\rho}$. Moreover, since $\Pi_A$ is a diagonal matrix with $(\Pi_A)_{a,a} = 1$ only if $a \in A$, otherwise $0$, then $\Pi_A \Pi^{(r_A)}_r = \Pi^{(r_A)}_r \Pi_A = \Pi_A$.
		Then we have 
		$$\|y - \widehat{y}\|_{\mathrm{Fro}} = \|\Pi_A(y - \widehat{y})\|_{\mathrm{Fro}} = \|\Pi_A \Pi^{(r_A)}_{r}(y - \widehat{y})\|_{\mathrm{Fro}} \leq \|\Pi_A\|_{\mathrm{op}} \|\Pi^{(r_A)}_{r}(y - \widehat{y})\|_{\mathrm{Fro}}$$
		where $\|\Pi_A\|_{\mathrm{op}}$ is the operator norm of $\Pi_A$ and satisfies $\|\Pi_A\|_{\mathrm{op}} \leq 1$ since $\Pi_A$ is a projection operator. Then
		$d_H(C(A), C^{\mathrm{out}}_{r,\rho}(A)) \leq \sup_{\widehat{y} \in C^{\mathrm{out}}_{r,\rho}(A)} \inf_{y \in C(A)} \|\Pi^{(r_A)}_r(y - \widehat{y})\|_{\mathrm{Fro}} \leq \sup_{\widehat{y} \in C^{\mathrm{out}}_{r,\rho}} \inf_{y \in C(A)} \|\Pi^{(r_A)}_r(y - \widehat{y})\|_{\mathrm{Fro}}
		\leq \frac{9 N r_A^2}{r^2}.$
	}
\end{proof}

\subsection{Inner approximations}

Another approach \hd{for approximating $C(A)$} consists of expressing \hd{the representing} measure $\nu$ as being absolutely continuous with respect to some reference measure $\gamma$ whose moments can be easily calculated, e.g. the Gaussian measure on $\ell_2(\Z^n)$ \cite{b98}.

Let
\[
C^{\text{inn}}_{r,\rho}(A) := \{(y_{\ba})_{\ba \in A} : y_{\ba} = \int_E c^{\ba} p(c) d\gamma(c) \:\text{for some}\: p \in Q_{r,\rho}\}.
\]
\begin{proposition}\label{inn}
	For all $r,\rho \in \N$,
	$C^{\mathrm{inn}}_{r,\rho}(A)$ is a semidefinite representable inner approximation of $C(A)$.
\end{proposition}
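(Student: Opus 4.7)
The plan is to verify the two claims separately: first that $C^{\mathrm{inn}}_{r,\rho}(A)\subseteq C(A)$, then that $C^{\mathrm{inn}}_{r,\rho}(A)$ admits an explicit spectrahedral representation. The key observation driving both steps is that on the Fourier ellipsoid $E$ the generator $g(c):=1-\sum_{a\in\Z^n,\,\delta_a\leq\rho} w_a|c_a|^2$ of the truncated quadratic module is non-negative, because it dominates the full localizer $1-\sum_{a\in\Z^n} w_a|c_a|^2\geq 0$ that defines $E$.

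For the inner-approximation claim, I would take $y\in C^{\mathrm{inn}}_{r,\rho}(A)$ with representing polynomial $p=s_0+s_1 g\in Q_{r,\rho}$ and define the Borel measure $\nu:=p\cdot(\gamma\llcorner E)$ on $E$, i.e. the restriction of $\gamma$ to $E$ weighted by $p$. Since $s_0,s_1$ are sums of Hermitian squares and $g\geq 0$ on $E$ by the remark above, $p\geq 0$ on $E$, so $\nu$ is a non-negative measure supported on $E$. Then $y_{\ba}=\int_E c^{\ba}\,d\nu(c)$ by construction, so $y$ belongs to the Fourier moment cone \eqref{fmc}, and by Proposition \ref{prop:mpe} it belongs to $C(A)$.

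For the semidefinite-representability claim, I would fix finite monomial bases $\phi$ of $\C[c]_{r,\rho}$ and $\psi$ of $\C[c]_{r-1,\rho}$ and write an arbitrary $p\in Q_{r,\rho}$ as $p(c)=\phi(c)^*G_0\phi(c)+\bigl(\psi(c)^*G_1\psi(c)\bigr)g(c)$ with Hermitian $G_0,G_1\succeq 0$. Substituting into the defining integral gives
\[
y_{\ba}=\operatorname{Tr}\bigl(G_0 M^{(0)}_{\ba}\bigr)+\operatorname{Tr}\bigl(G_1 M^{(1)}_{\ba}\bigr),
\]
where the matrices $M^{(0)}_{\ba}$ and $M^{(1)}_{\ba}$ have entries $\int_E c^{\ba}\phi_i^*\phi_j\,d\gamma$ and $\int_E c^{\ba}\psi_i^*\psi_j\,g\,d\gamma$ respectively. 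These entries are fixed scalars depending only on $\gamma$, $\ba$, and $\rho$, so $(y_{\ba})_{\ba\in A}$ is an affine image of the semidefinite cone $\{(G_0,G_1):G_0,G_1\succeq 0\}$, which is precisely a semidefinite representation.

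The main obstacle is purely a well-posedness check: since $\gamma$ lives on the infinite-dimensional space $\ell_2(\Z^n)$, one must verify that the scalars $M^{(k)}_{\ba}$ are finite. This is however not a genuine difficulty because the integrands $c^{\ba}\phi_i^*\phi_j$ and $c^{\ba}\psi_i^*\psi_j g$ depend only on the finitely many Fourier coordinates with $\delta_a\leq\max(\rho,\rho_A)$; Fubini reduces each integral to a moment of a finite-dimensional Gaussian, which is finite and computable in closed form. The restriction to $E$ is automatic since $\nu$ is defined on $E$ directly, and the finiteness of $\nu$ follows from $\int_E p\,d\gamma=y_{\emptyset}<\infty$ for the empty-index moment.
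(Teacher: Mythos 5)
Your proof is correct and the inclusion argument is essentially the paper's: you build the non-negative measure $\nu = p\,(\gamma\llcorner E)$ from $p\in Q_{r,\rho}$, using that each generator of the truncated quadratic module is non-negative on $E$. You additionally write out the Gram-matrix/trace form of the semidefinite representation, which the paper leaves implicit (it is the same device as in its proof of Proposition \ref{out}); your only loose phrase is the claim that the moment matrices reduce to closed-form Gaussian moments — the restriction to $E$ spoils the closed form, though finiteness still holds since $E$ is compact and $\gamma$ is finite.
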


\begin{proof}
	{
		Note that $\nu = p \gamma$ is a measure. Indeed\hd{,} it is absolutely continuous with respect to $\gamma$, with Radon-Nikod\'ym derivative $p$, which is a continuous and bounded function over the compact set $E$, and, in particular, non-negative by construction, since it is  constrained to the quadratic module $Q_{r,\rho}$.
		Since the set $\{p\gamma : p \in Q_{r,\rho}\}$ is, by construction\hd{,} a subset of the measures over $E$, we have that 
		$C^{\mathrm{inn}}_{r,\rho} \subseteq C(A).$
	}
\end{proof}

\begin{proposition}\label{inndense}
	$\overline{C^{\mathrm{inn}}_{\infty,\infty}}(A)=C(A)$.
\end{proposition}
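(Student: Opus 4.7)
The plan is to complement the inclusion $C^{\mathrm{inn}}_{r,\rho}(A) \subseteq C(A)$ already established in Proposition \ref{inn} with a density argument in the opposite direction, showing that every $y \in C(A)$ can be approximated in the moment topology by vectors of the form $(\int_E c^{\ba} p\,d\gamma)_{\ba \in A}$ with $p \in Q_{r,\rho}$. My strategy is to reduce the problem to a finite-dimensional one by projecting the representing measure onto a coordinate subspace of large harmonic degree, then apply a classical polynomial-density approximation on a compact semi-algebraic set, and finally invoke the finite-dimensional Putinar Positivstellensatz to place the resulting polynomial density inside the truncated quadratic module.

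First I would fix $y \in C(A)$ and use Proposition \ref{prop:mpe} to obtain a measure $\nu$ supported on $E$ with $y_{\ba} = \int_E c^{\ba}\,d\nu(c)$ for every $\ba \in A$. For each $\rho \geq \rho_A$, let $V_\rho \subset \ell_2(\Z^n)$ denote the finite-dimensional subspace spanned by the coordinates $\{c_a : |a_i| \leq \rho\}$, let $\pi_\rho$ be the coordinate projection, and define $\nu_\rho := (\pi_\rho)_{\#}\nu$. Since every $\ba \in A$ only involves coordinates with $|a_i| \leq \rho_A \leq \rho$, we have $\int c^{\ba}\,d\nu_\rho = y_{\ba}$ for all $\ba \in A$. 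Moreover $\nu_\rho$ is supported on the compact finite-dimensional ellipsoid $E_\rho := \pi_\rho(E) = \{c \in V_\rho : \sum_{|a_i| \leq \rho} w_a |c_a|^2 \leq 1\}$, which is exactly the semi-algebraic set cut out by the polynomial used in the definition of $Q_{r,\rho}$.

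Next I would carry out the finite-dimensional density step. Let $\gamma_\rho := (\pi_\rho)_{\#}\gamma$; with a standard diagonal non-degenerate choice of $\gamma$ on $\ell_2(\Z^n)$, this is a non-degenerate Gaussian on $V_\rho$. A standard mollification on $V_\rho$ produces, for each $\epsilon > 0$, a measure $\tilde\nu_\rho = g\,d\gamma_\rho$ whose density $g$ is smooth and bounded below by a strictly positive constant on $E_\rho$, and whose moments agree with those of $\nu_\rho$ on $A$ up to error $\epsilon$. By Stone--Weierstrass on the compact set $E_\rho$, one obtains a polynomial $q \in \C[c]_{r,\rho}$ (for $r$ large enough) with $\|q - g\|_{\infty, E_\rho} < \epsilon$; by choosing the mollification carefully and adding a small positive constant, $q$ can be made strictly positive on $E_\rho$. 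The finite-dimensional Putinar Positivstellensatz applied to the Archimedean quadratic module on $V_\rho$ generated by the single constraint $1 - \sum_{|a_i| \leq \rho} w_a |c_a|^2 \geq 0$ then yields a representation $q = s_0 + s_1(1 - \sum_{|a_i| \leq \rho} w_a |c_a|^2)$ with $s_0, s_1$ Hermitian sums of squares, i.e. $q \in Q_{r',\rho}$ for some $r' \geq r$. The resulting moment vector lies in $C^{\mathrm{inn}}_{r',\rho}(A)$ and within $O(\epsilon)$ of $y$; sending $\epsilon \to 0$ and $r', \rho \to \infty$ gives $y \in \overline{C^{\mathrm{inn}}_{\infty,\infty}(A)}$.

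The main obstacle is the finite-dimensional density step: one has to produce a polynomial density that is genuinely non-negative on $E_\rho$, not merely a polynomial that weakly approximates $\nu_\rho$. This forces the mollification to yield a smooth density that is bounded below by a strictly positive constant on $E_\rho$, so that a small polynomial perturbation can preserve strict positivity before Putinar is invoked. A secondary technical point is ensuring that the chosen Gaussian reference $\gamma$ on $\ell_2(\Z^n)$ projects to non-degenerate Gaussians on every $V_\rho$, which is automatic for the standard diagonal non-degenerate constructions cited in the paper. Once both points are secured, the finite-dimensional Putinar representation is essentially immediate, and the inner approximations inherit the desired density property in the moment topology.
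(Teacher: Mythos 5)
Your route is genuinely different from the paper's: the paper disposes of this proposition in three lines by invoking the infinite-dimensional Jacobi/Putinar Positivstellensatz of \cite{gkm14} to say that elements of $Q_{r,\rho}$ approximate any polynomial nonnegative on $E$, and it leaves implicit the harder step of approximating an \emph{arbitrary} representing measure $\nu$ by one of the form $p\gamma$ with $p$ a nonnegative polynomial. Your finite-dimensional reduction (project to $V_\rho$, mollify, Stone--Weierstrass, classical Putinar) is aimed precisely at that missing step, which is to your credit. However, there is one concrete gap in the execution: you build the density $g$ and the polynomial $q$ relative to the reference measure $\gamma_\rho|_{E_\rho}$, whereas $C^{\mathrm{inn}}_{r,\rho}(A)$ is defined by the integrals $\int_E c^{\ba} q(c)\,d\gamma(c)$ over the \emph{infinite-dimensional} set $E$ against the \emph{infinite-dimensional} Gaussian $\gamma$. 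For a cylindrical $q$ depending only on the $V_\rho$-coordinates one has
$$\int_E c^{\ba} q(c)\,d\gamma(c) \;=\; \int_{E_\rho} c^{\ba} q(c)\, h(c)\,d\gamma_\rho(c), \qquad h(c):=\gamma^{\perp}_\rho\Bigl(\bigl\{c^{\perp} : \textstyle\sum_{\delta_a>\rho} w_a |c^{\perp}_a|^2 \le 1-\sum_{\delta_a\le\rho} w_a |c_a|^2\bigr\}\Bigr),$$
and the correction factor $h$ is not constant; it is strictly decreasing in $\sum_{\delta_a\le\rho} w_a|c_a|^2$ and tends to $0$ at the boundary of $E_\rho$. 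So the moment vector you actually produce is $\bigl(\int_{E_\rho} c^{\ba} q\,h\,d\gamma_\rho\bigr)_{\ba\in A}$, not the $\bigl(\int_{E_\rho} c^{\ba} q\,d\gamma_\rho\bigr)_{\ba\in A}$ you estimated, and the conclusion ``the resulting moment vector lies in $C^{\mathrm{inn}}_{r',\rho}(A)$ within $O(\epsilon)$ of $y$'' does not follow as written.

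The repair is not difficult but must be made explicit: the polynomial $q$ should approximate $g/h$ rather than $g$, i.e.\ the Radon--Nikod\'ym derivative of the mollified measure with respect to $(\pi_\rho)_{\#}(\gamma|_E)$ rather than with respect to $\gamma_\rho$. Since $h$ vanishes on $\partial E_\rho$, this quotient can be unbounded if $\nu_\rho$ charges a neighborhood of the boundary, so you should first contract the representing measure into the interior (e.g.\ replace $\nu$ by the pushforward under $c\mapsto(1-\delta)c$, which perturbs each finite collection of moments by $O(\delta)$); on the contracted support $h$ is bounded below by a positive constant and is continuous, so $g/h$ is continuous and bounded below, and your Stone--Weierstrass plus finite-dimensional Putinar steps then go through verbatim. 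With that modification (and the routine check that the mass leaking outside $E_\rho$ under mollification contributes only $O(\epsilon)$ to the moments), your argument is a complete and in fact more self-contained proof than the one in the paper.
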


\begin{proof}
	As \hd{already mentioned} in the proof of Proposition \ref{outdense}, according to {Jacobi}'s Positivstellensatz {-- see \cite[Theorem 2.1]{gkm14}, originally stated in \cite[Theorem 4]{j01} --} elements of $Q_{r,\rho}$ can approximate as closely as desired any polynomial nonnegative on $E$, i.e. we can construct a sequence $p_{r,\rho} \in Q_{r,\rho}$ such that $\|p-\hd{p_{r,\rho}}\|_W \to 0$ when $r,\rho \to \infty$. It follows that for all $\ba \in A$, $\int_E c^{\ba} p_{r,\rho}(c) d\gamma(c) \to \int_E c^{\ba} p(c) d\gamma(c) = y_{\ba}$ when $r,\rho \to \infty$.
\end{proof}

\section{Solving Sobolev POPs}\label{sec:sobolev}

\subsection{Harmonic Sobolev POP}

We are now fully equipped to solve a harmonic Sobolev POP (polynomial optimization problem) of the form
\begin{equation}\label{hspop}
p^* := \inf_{f\in B} p(f)
\end{equation}
where $$p(f) = \sum_{\ba \in 
	A} p_{\ba} m_{\ba}(f)$$ is a given Hermitian polynomial in the indeterminate $f \in B$, of support $A:=\mathrm{spt}\:p$.
Problem (\ref{hspop}) is called 
{\it harmonic} because the harmonic degree $\delta(p)$ is finite, and the problem does not involve harmonics of degrees higher than $\delta(p)$.
Note that the infimum in (\ref{hspop}) is always attained, since it is a finite-dimensional problem and $B$ is bounded.

Harmonic Sobolev POP (\ref{hspop}) is equivalent to the linear problem
\[
p^* := \min_{\mu \in P(B)} \int_B p(f)d\mu(f)
\]
on $P(B)$, the set of probability measures on the Sobolev ball $B$.
Using the Fourier embedding, harmonic Sobolev POP (\ref{hspop}) is equivalent to the harmonic Fourier POP
\[
p^* := \min_{c \in E} p(c)
\]
and the linear problem
\[
p^* := \min_{\nu \in P(E)} \int_E p(c)d\nu(c)
\]
on $P(E)$, the set of probability measures on the Fourier ellipsoid $E$. In turn, this is equivalent to the linear problem
\begin{equation}\label{hlp}
p^* := \min_{y \in C(A)} \sum_{\ba \in A} p_{\ba} y_{\ba} \:\:\mathrm{s.t.}\:\:y_{\emptyset}=1
\end{equation}
on the cone of moments $C(A)$.

Therefore\hd{,} we can design a moment-SOS hierarchy of lower bounds
\[
p^{\mathrm{out}}_{r,\rho} := \min_{y \in C^{\mathrm{ out}}_{r,\rho}(A)} \sum_{\ba \in A} p_{\ba} y_{\ba}
\]
as well as a moment-SOS hierarchy of upper bounds
\[
p^{\mathrm{inn}}_{r,\rho} := \min_{y \in C^{\mathrm {inn}}_{r,\rho}(A)} \sum_{\ba \in A} p_{\ba} y_{\ba}
\]
for increasing algebraic\hd{,} resp. harmonic\hd{,} relaxation degrees $r \geq d(p)$, $\rho \geq \delta(p)$.

\begin{theorem}\label{thm:momsos}
	{For all $r\geq r' \geq d(p)$ and $\rho = \delta(p)$ it holds $$p^{\mathrm{out}}_{r',\rho} \leq p^{\mathrm{out}}_{r,\rho} \leq p^{\mathrm{out}}_{\infty,\rho} = p^* = p^{\mathrm{inn}}_{\infty,\rho} \leq p^{\mathrm{inn}}_{r,\rho} \leq p^{\mathrm{inn}}_{r',\rho}.$$}
\end{theorem}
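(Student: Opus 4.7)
The chain of inequalities naturally splits into three pieces: monotonicity of the two hierarchies in $r$, the sandwich $C^{\mathrm{inn}}_{r,\rho}(A) \subseteq C(A) \subseteq C^{\mathrm{out}}_{r,\rho}(A)$, and the two-sided convergence at $r=\infty$. My plan is to dispatch the first two essentially by inspection using what has already been proved, then treat convergence by reducing to a finite-dimensional Putinar setting.

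The plan is to start with monotonicity. For $r' \leq r$ one has $\Sigma_{r',\rho} \subseteq \Sigma_{r,\rho}$ and $\Sigma_{r'-1,\rho} \subseteq \Sigma_{r-1,\rho}$, so the quadratic modules nest: $Q_{r',\rho} \subseteq Q_{r,\rho}$. For the outer hierarchy this makes the positivity constraint on $\ell_y$ strictly harder, hence $C^{\mathrm{out}}_{r,\rho}(A) \subseteq C^{\mathrm{out}}_{r',\rho}(A)$, so minimizing the fixed linear functional $\sum_{\ba} p_{\ba} y_{\ba}$ over a smaller set yields a larger value: $p^{\mathrm{out}}_{r',\rho} \leq p^{\mathrm{out}}_{r,\rho}$. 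For the inner hierarchy the density $p$ ranges over a \emph{larger} set as $r$ grows, so $C^{\mathrm{inn}}_{r',\rho}(A) \subseteq C^{\mathrm{inn}}_{r,\rho}(A)$ and the inequality reverses: $p^{\mathrm{inn}}_{r,\rho} \leq p^{\mathrm{inn}}_{r',\rho}$.

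Next I would invoke Proposition \ref{out} (outer approximation) and Proposition \ref{inn} (inner approximation), which give the sandwich $C^{\mathrm{inn}}_{r,\rho}(A) \subseteq C(A) \subseteq C^{\mathrm{out}}_{r,\rho}(A)$. Combined with the equivalent linear reformulation \eqref{hlp} of harmonic Sobolev POP on $C(A)$, this immediately yields the middle two inequalities $p^{\mathrm{out}}_{r,\rho} \leq p^* \leq p^{\mathrm{inn}}_{r,\rho}$.

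The substantive step is the two equalities $p^{\mathrm{out}}_{\infty,\rho} = p^* = p^{\mathrm{inn}}_{\infty,\rho}$ with $\rho = \delta(p)$ held \emph{fixed} (rather than $\rho \to \infty$ as in Propositions \ref{outdense}, \ref{inndense}). The key observation I would exploit is that, because $A = \mathrm{spt}\,p$ has harmonic degree $\rho_A \leq \rho$, both the objective $\sum_{\ba \in A} p_{\ba} y_{\ba}$ and the defining equations of $C(A)$ depend only on the finitely many Fourier coordinates $c_a$ with $\max_i|a_i| \leq \rho$. Letting $E_\rho \subset \mathbb{C}^{(2\rho+1)^n}$ denote the projection of $E$ onto these coordinates -- a compact finite-dimensional ellipsoid -- the optimization $\min_{\nu \in P(E)} \int p\, d\nu$ coincides with $\min_{\nu' \in P(E_\rho)} \int p\, d\nu'$, by passing to marginals. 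On this finite-dimensional slice the quadratic module $Q_{r,\rho}$ is generated by a single polynomial $1 - \sum_{\delta_a \leq \rho} w_a |c_a|^2$ whose sublevel set is compact, so it is Archimedean and the classical finite-dimensional Putinar Positivstellensatz applies; equivalently, Propositions \ref{outdense} and \ref{inndense} specialise to this finite-dimensional setting and yield $\overline{C^{\mathrm{out}}_{\infty,\rho}}(A) = C(A) = \overline{C^{\mathrm{inn}}_{\infty,\rho}}(A)$. The two equalities for the optimal values then follow from continuity of the linear objective together with attainment of $p^*$ on the compact set $E_\rho$.

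The main obstacle is a clean justification of this reduction-to-finite-dimensions argument: one has to verify that the infinite tail of Fourier coefficients can be truncated without affecting $p^*$ or the limiting cones, and that the quadratic module $Q_{r,\rho}$ restricted to $\rho = \delta(p)$ is still rich enough (i.e. Archimedean) to invoke Putinar. Once this finite-dimensional reduction is in place, all six inequalities in the chain follow from the monotonicity and sandwich arguments assembled above.
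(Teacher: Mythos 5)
Your proof is correct and follows essentially the same route as the paper: the sandwich $C^{\mathrm{inn}}_{r,\rho}(A)\subseteq C(A)\subseteq C^{\mathrm{out}}_{r,\rho}(A)$ from Propositions \ref{out} and \ref{inn}, monotonicity from the nesting $Q_{r',\rho}\subseteq Q_{r,\rho}$, and convergence from the Positivstellensatz-based density results. You are in fact more careful than the paper on the one delicate point: Propositions \ref{outdense} and \ref{inndense} are stated for $r,\rho\to\infty$ jointly, whereas the theorem fixes $\rho=\delta(p)$, and your reduction to the finite-dimensional projected ellipsoid $E_\rho$ --- on which the truncated quadratic module is generated by $1-\sum_{\delta_a\le\rho}w_a|c_a|^2$ and is therefore Archimedean, so the classical Putinar Positivstellensatz applies --- supplies exactly the justification that the paper compresses into the remark that moments of higher harmonic degree ``are not necessary''.
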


\begin{proof}
	{The objective function contains moments of harmonic degrees up to $\delta(p)$, so it is not necessary to have moments of higher harmonic degrees in the outer approximation
		$C^{\text{out}}_{r,\rho}(A)$ of the cone of moments, i.e. $\rho = \delta(p)$.} The statement then \hd{readily follows} by applying Propositions \ref{out}, \ref{outdense}, \ref{inn} and \ref{inndense}.
\end{proof}

\subsubsection{Example}

Consider the harmonic Sobolev POP
\[
p^*=\min_{f\in B} \:\: \scal{f}{e_0}^4_{H^0(T)} + (\scal{f}{e_1}^2_{H^0(T)}-1/4)^2
\]
on $B \subset H^0(T)$, i.e.
$n=1$, $m=0$ and harmonic degree $\rho=1$. Observe that the function to be minimized is non-convex in $f$.

The harmonic Sobolev POP is equivalent to the harmonic Fourier POP
\[
p^*=\min_{c_{-1},c_0,c_1} \:\: c^4_0 + (c^2_1-1/4)^2 \:\mathrm{s.t.}\:c^2_{-1}+c^2_0+c^2_1 \leq 1.
\]
Note that the Fourier coefficient $c_{-1}$ does not appear in the objective function, and hence without loss of generality it can be set to zero.

With the outer moment-SOS hierarchy, at algebraic relaxation degree $r=2$, we obtain  the two global minimizers $c^*_{-1}=0$, $c^*_0=0$, $c^*_1=\pm 1/2$ and the corresponding functions $f^*(x)=\pm e^{-2\pi{\bf i}x}/2$ achieving the global minimum $p^*=p^{\mathrm{out}}_{2,1}=0$.

\subsection{Algebraic Sobolev POP}

Another class of POP on Sobolev functions is 
\begin{equation}\label{aspop}
p^* = \inf_{f \in B} L(p(f,D^{a_1}f,\ldots,D^{a_l}f))
\end{equation}
where $p$ is a given real valued multivariate polynomial of degree $d_p$ of a function $f \in B$ and its derivatives $D^{a_j}f$, $a_j \in \N^n$, $j=1,\ldots,l$ and $L : L^\infty(T^n) \to \R$ is a given {bounded} linear functional. \hd{The} coefficients of $p$ are bounded functions \hd{on $T^n$}. For example
\begin{equation}\label{sp}
L(p(f))=\int_{T^n} (p_1(x)f(x)+p_2(x)\|Df(x)\|^2_2) d\sigma(x)
\end{equation}
where $\sigma$ is a given probability measure on $T^n$ and $p_1, p_2$ are given real polynomials of \hd{the variable} $x$.

Note that\hd{,} contrary to harmonic problem (\ref{hspop}), the non-linearity hits directly the function value $f(x)$ and its derivatives, and hence problem (\ref{aspop}) generally involves infinitely many harmonics. Still, problem (\ref{aspop}) is called {\it algebraic} because $p$ is a finite degree  polynomial.

The following result guarantees that the non-linear functional defined above is well defined on $H^m(T^n)$ with $m$ large enough.

\begin{proposition}
	{The functional of Sobolev POP \eqref{aspop} is bounded when $f \in H^{s + n/2+1}_2(T^n)$ where $s = \max_{j=1,\ldots,\hd{l}} |a_j|$.}
\end{proposition}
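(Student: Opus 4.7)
The plan is to bound the functional $f \mapsto L(p(f,D^{a_1}f,\ldots,D^{a_l}f))$ on bounded subsets of $H^{s+n/2+1}(T^n)$ in three stages. First, I would control each argument $D^{a_j}f$ in $L^\infty(T^n)$ via a Sobolev embedding on the torus. Second, I would propagate the $L^\infty$ bounds through the polynomial $p$, whose coefficients are bounded on $T^n$, so that $p(f,D^{a_1}f,\ldots,D^{a_l}f) \in L^\infty(T^n)$. Third, I would invoke the hypothesis that $L$ is a bounded linear functional on $L^\infty(T^n)$ to conclude.

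The central ingredient is the Sobolev embedding $H^k(T^n) \hookrightarrow L^\infty(T^n)$ for $k>n/2$, which in the Fourier picture of Section \ref{sec:fourier} is transparent. For $g \in H^k(T^n)$ with Fourier coefficients $c_a = \scal{g}{e_a}_{L^2}$ and weights $w_a = (1+\scal{a}{a}_{\ell_2})^k$, Cauchy--Schwarz together with the pointwise bound $\|g\|_{L^\infty(T^n)} \leq \sum_a |c_a|$ (since $|e_a(x)|=1$) yields
$$\|g\|_{L^\infty(T^n)} \;\leq\; \Bigl(\sum_{a\in\Z^n} w_a^{-1}\Bigr)^{1/2}\|g\|_{H^k(T^n)},$$
and the prefactor is finite precisely when $2k>n$, by comparison of the lattice sum with the integral $\int_{\R^n}(1+\|x\|^2)^{-k}dx$. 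Taking $k=n/2+1$ is sufficient. Since $D^{a_j}$ acts on the Fourier side by multiplication by a polynomial of degree $|a_j|$ in $a$, for $|a_j|\leq s$ and $f \in H^{s+n/2+1}(T^n)$ I obtain $D^{a_j}f \in H^{s+n/2+1-|a_j|}(T^n) \subseteq H^{n/2+1}(T^n)$, hence $\|D^{a_j}f\|_{L^\infty(T^n)} \leq C_n \|f\|_{H^{s+n/2+1}(T^n)}$, and the same estimate obviously holds for $f$ itself.

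With every argument of $p$ uniformly bounded in $L^\infty(T^n)$, a pointwise polynomial estimate of degree $d_p$ with coefficients in $L^\infty(T^n)$ gives
$$\|p(f,D^{a_1}f,\ldots,D^{a_l}f)\|_{L^\infty(T^n)} \;\leq\; C_p\bigl(1+\|f\|_{H^{s+n/2+1}(T^n)}\bigr)^{d_p},$$
after which the boundedness of $L$ on $L^\infty(T^n)$ yields the desired control on the full functional, uniformly on bounded subsets of $H^{s+n/2+1}(T^n)$ (in particular on the unit ball $B$).

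I do not expect a genuine obstacle here: the only sensitive point is the sharp threshold $k>n/2$ in the Sobolev embedding, and the Fourier formulation of the paper reduces this to the elementary convergence of a lattice sum. The integer $n/2+1$ in the statement is just a convenient margin above that threshold, leaving exactly $s$ Sobolev derivatives available to absorb the differential operators $D^{a_j}$ with $|a_j|\leq s$.
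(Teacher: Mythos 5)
Your proof is correct and follows the same overall strategy as the paper's: control $f$ and its derivatives $D^{a_j}f$ in the sup norm via a Sobolev embedding, deduce that the polynomial expression is bounded, and finish using the boundedness of the linear functional $L$. The difference is in how the embedding step is justified. The paper cites the Sobolev embedding theorem for a bounded domain $\Omega\subset\R^n$ with Lipschitz boundary, concludes that each $D^{a_j}f$ is Lipschitz (hence in $L^q$ for every $q$), and then invokes H\"older's inequality; this imports a general theorem stated for Euclidean domains into the periodic setting. You instead prove the needed embedding $H^{k}(T^n)\hookrightarrow L^\infty(T^n)$ for $k>n/2$ directly on the Fourier side, via $\|g\|_{L^\infty}\leq\sum_a|c_a|\leq(\sum_a w_a^{-1})^{1/2}\|g\|_{H^k}$ and the convergence of the lattice sum, and you track exactly how each $D^{a_j}$ costs $|a_j|\leq s$ orders of smoothness. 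This is more elementary, entirely self-contained within the paper's own Fourier framework of Section 3, and avoids the H\"older detour (an $L^\infty$ bound on the integrand plus boundedness of $L$ suffices). It also makes transparent why the exponent $s+n/2+1$ appears, and quietly handles the fact that this exponent need not be an integer, since the Fourier-weighted norm makes sense for any real smoothness index.
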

\begin{proof}
	By the Sobolev embedding theorem \cite{fournier2003} {in a bounded set $\Omega$ with Lipschitz boundary in $\R^n$}, when $f \in H^{m+n/2+1}_2(\Omega)$ then $D^a f$ is a Lipschitz function for any $a$ satisfying $|a| \leq m$. Since now 
	$D^{a_j} f \in L^q(\Omega)$ for any $q \in [1,\infty]$ (due to the boundedness of $\Omega$), the desired result is obtained by applying the H\"older inequality.$\:$`
\end{proof}

Let us express the objective function of (\ref{aspop}) as a polynomial function of $c$, the Fourier coefficients of $f$. Indeed, if $f = \scal{c}{e}_{\ell_2} = \sum_{a \in \Z^n} c_a e_{-a}$ then a monomial of degree $d \in \N$ writes $$f^d=\scal{c}{e}^d_{\ell_2} = \sum_{a_1, a_2, \ldots, a_d \in \Z^n} c_{a_1} c_{a_2} \cdots c_{a_d} e_{-(a_1+a_2+\ldots+a_d)}$$
and it follows that 
$$L(f^d)=\sum_{a_1, a_2, \ldots, a_d \in \Z^n} 
c_{a_1} c_{a_2} \cdots c_{a_d}
z_{-(a_1+a_2+\ldots+a_d)} $$ where
\begin{equation}\label{mlf}
z_a:=L(e_a)
\end{equation}
is the moment of index $a \in \Z^n$ of \hd{the} linear functional $L$. 
Similarly, successive derivatives of $f$ will be expressed as linear functions of $c$, and hence polynomials of these derivatives will be multivariate polynomials of $c$.
Overall, the objective function is a polynomial $q$ in infinitely countably many variables with finite algebraic degree $d(q)=d_p$ and infinite harmonic degree $\delta(q)=\infty$.
Algebraic Sobolev POP (\ref{aspop}) can therefore be written equivalently as the algebraic Fourier POP
\[
p^*=\inf_{c \in E} q(c) = \sum_{\ba \in \Z^n} q_{\ba} c^{\ba}.
\]
In order to apply the moment-SOS hierarchy, we reformulate this POP as a
linear problem
\[
p^* := \min_{\nu \in P(E)} \int_E q(c)d\nu(c)
\]
on $P(E)$. In turn, this is equivalent to the infinite-dimensional linear problem
\[ 
p^* := \min_{y \in C(\Z^n)} \sum_{\ba \in \Z^n} q_{\ba} y_{\ba} \:\:\mathrm{s.t.}\:\:y_{\emptyset}=1
\]
on the full cone of moments
\[
C(\Z^n)  := \left\{(y_{\ba})_{\ba \in \Z^n} : y_{\ba} = \int_E c^{\ba}\,d\nu(c) \:\text{for some}\: \nu\:\text{supported on}\:E\right\} \hd{\:\subset\:} \ell_2(\Z^n).
\]
In contrast, the harmonic Sobolev POP of the previous section was reformulated as the finite-dimensional linear problem \eqref{hlp} on a truncated cone of moments.

As in the previous section, for every finite algebraic \hd{degree $r$ and} harmonic degree $\rho$, we can define the outer approximations
\[
C^{\text{out}}_{r,\rho}(\Z^n) := \Pi_A\{(y_{\ba})_{\ba \in \Z^n} : \ell_y(q)= \sum_{\ba \in \Z^n} q_{\ba}c^{\ba} \geq 0 \:\text{for all}\: q \in Q_{r,\rho}\}
\]
and inner approximations
\[
C^{\text{inn}}_{r,\rho}(\Z^n) := \{(y_{\ba})_{\ba \in \Z^n} : y_{\ba} = \int_E c^{\ba} q(c) d\gamma(c) \:\text{for some}\: q \in Q_{r,\rho}\}
\]
such that $C^{\text{inn}}_{r,\rho}(\Z^n) \subset C(\Z^n) \subset C^{\text{out}}_{r,\rho}(\Z^n)$ and asymptotically
$\overline{C^{\text{inn}}_{\infty,\infty}(\Z^n)} = C(\Z^n) = \overline{C^{\text{out}}_{\infty,\infty}(\Z^n)}$, but these are now infinite-dimensional cones that must be truncated to be manipulated numerically.

Given algebraic \hd{degree $r$} and \hd{harmonic degree} $\rho$, let us consider the finite dimensional linear problem
\[
p^*_{r,\rho} := \min_{y \in C(A_{r,\rho})} \sum_{\ba \in A_{r,\rho}} q_{\ba} y_{\ba} \:\:\mathrm{s.t.}\:\:y_{\emptyset}=1
\]
on the finite-dimensional cone of moments $C(A_{r,\rho})$ indexed by $$A_{r,\rho}:=\{\ba \in \Z^n : d_{\ba} \leq r, \: \delta_{\ba} \leq \rho\}.$$
We can design an outer moment-SOS hierarchy
\[
p^{\mathrm{out}}_{r,\rho} := \min_{y \in C^{\mathrm{ out}}_{r,\rho}(A_{r,\rho})} \sum_{\ba \in A_{r,\rho}} q_{\ba} y_{\ba}
\]
as well as an inner moment-SOS hierarchy
\[
p^{\mathrm{inn}}_{r,\rho} := \min_{y \in C^{\mathrm {inn}}_{r,\rho}(A_{r,\rho})} \sum_{\ba \in A_{r,\rho}} q_{\ba} y_{\ba}
\]
for increasing algebraic \hd{relaxation degree $r$ and} harmonic relaxation degree $\rho$.
Our convergence result then \hd{immediately follows} from the above considerations.

\begin{theorem}
	{It holds $p^{\mathrm{out}}_{\infty,\infty} = p^*$ and $p^{\mathrm{inn}}_{\infty,\infty}=p^*$.}
\end{theorem}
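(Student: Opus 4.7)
The two equalities $p^{\mathrm{out}}_{\infty,\infty} = p^* = p^{\mathrm{inn}}_{\infty,\infty}$ will follow by combining three ingredients: (i) the reformulation derived in the text of the algebraic Sobolev POP as the infinite-dimensional linear problem $\min_{y \in C(\Z^n),\,y_\emptyset=1} F(y)$ with $F(y) := \sum_{\ba \in \Z^n} q_\ba y_\ba$; (ii) the density results of Propositions \ref{outdense} and \ref{inndense}, applied at each finite index set $A_{r,\rho}$; and (iii) the key observation that the truncated objective $F_{r,\rho}(y) := \sum_{\ba \in A_{r,\rho}} q_\ba y_\ba$ converges to $F(y)$ as $\rho \to \infty$, uniformly over moment vectors of probability measures on the compact ellipsoid $E$.

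For the outer direction, $\limsup_{r,\rho} p^{\mathrm{out}}_{r,\rho} \leq p^*$ will be obtained by projecting an optimal moment vector $y^* \in C(\Z^n)$ (whose existence follows from compactness of $E$ and continuity of $c \mapsto q(c)$ on $E$) onto $A_{r,\rho}$: the projection lies in $C(A_{r,\rho}) \subseteq C^{\mathrm{out}}_{r,\rho}(A_{r,\rho})$ and achieves truncated objective $F_{r,\rho}(y^*) \to p^*$ by (iii). For $\liminf_{r,\rho} p^{\mathrm{out}}_{r,\rho} \geq p^*$, I will take near-optimizers $y^{(r,\rho)} \in C^{\mathrm{out}}_{r,\rho}(A_{r,\rho})$; uniform moment bounds derived from $y_\emptyset^{(r,\rho)} = 1$ together with the quadratic-module constraint $1 - \sum_{\delta_a \leq \rho} w_a |c_a|^2 \in Q_{r,\rho}$ allow a diagonal-subsequence extraction to a limiting sequence $y^\infty$ that, by Proposition \ref{outdense} applied on every fixed finite $A \subseteq \Z^n$, belongs to $C(\Z^n)$. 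Ingredient (iii) then passes the objective to the limit, yielding $F(y^\infty) \leq \liminf p^{\mathrm{out}}_{r,\rho}$ and hence $\geq p^*$.

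The inner direction is symmetric. Each $y \in C^{\mathrm{inn}}_{r,\rho}(A_{r,\rho})$ is, by definition, the truncation of the moment sequence of a genuine measure $p\gamma$ on $E$, so $p^{\mathrm{inn}}_{r,\rho} \geq p^*$ holds up to the truncation error from (iii), giving $\liminf p^{\mathrm{inn}}_{r,\rho} \geq p^*$. Conversely, given an optimal $\nu^*$ achieving $p^*$ and any $\epsilon > 0$, Proposition \ref{inndense} supplies, for $r,\rho$ large enough, an element of $C^{\mathrm{inn}}_{r,\rho}(A_{r,\rho})$ whose moments approximate those of $\nu^*$ on $A_{r,\rho}$ within $\epsilon$; combined with (iii), this yields $\limsup p^{\mathrm{inn}}_{r,\rho} \leq p^*$.

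The main obstacle is making (iii) rigorous: since $q$ has finite algebraic degree $d_p$ but possibly infinite harmonic degree, the tail $F - F_{r,\rho}$ is an infinite series that must vanish uniformly over the feasible $y$. The plan is to rewrite $F_{r,\rho}(y) = \int_E q_{r,\rho}(c)\,d\nu(c)$ with $q_{r,\rho}$ the truncated polynomial, invoke pointwise convergence $q_{r,\rho}(c) \to q(c)$ on $E$ (because each $c \in E$ has rapidly decaying coordinates $|c_a| \leq w_a^{-1/2} \to 0$), and apply dominated convergence using a uniform bound on $|q_{r,\rho}(c)|$ over $E$ coming from the representation of $q$ as the bounded functional $L$ applied to a polynomial in $f = F^*c$ and its derivatives, all controlled uniformly over the compact ellipsoid $E$.
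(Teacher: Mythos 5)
Your proposal proves a different (and substantially stronger) statement than the one the paper establishes. In the paper, $p^{\mathrm{out}}_{\infty,\infty}$ and $p^{\mathrm{inn}}_{\infty,\infty}$ refer to the \emph{untruncated} problems: optimization of the full objective $\sum_{\ba}q_{\ba}y_{\ba}$ over the cones $C^{\mathrm{out}}_{\infty,\infty}(\Z^n)$ and $C^{\mathrm{inn}}_{\infty,\infty}(\Z^n)$, whose closures coincide with $C(\Z^n)$ by Propositions \ref{outdense} and \ref{inndense}. Combined with the reformulation of the algebraic Sobolev POP as a linear problem over $C(\Z^n)$, the theorem is then immediate, and that is the entirety of the paper's argument. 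You instead read the theorem as asserting $\lim_{r,\rho\to\infty}p^{\mathrm{out}}_{r,\rho}=p^*=\lim_{r,\rho\to\infty}p^{\mathrm{inn}}_{r,\rho}$ for the truncated finite-dimensional values. That is the practically relevant statement, but it is not what the paper proves; indeed the Remark immediately following the theorem warns that $p^{\mathrm{out}}_{r,\rho}$ and $p^{\mathrm{inn}}_{r,\rho}$ need not even be one-sided bounds on $p^*$ because of the harmonic truncation of the objective, which is precisely the difficulty your ingredient (iii) must overcome.

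And (iii) is where your argument has a genuine gap. Dominated convergence, as you invoke it, gives $\int_E q_{r,\rho}\,d\nu \to \int_E q\,d\nu$ for each \emph{fixed} measure $\nu$, not uniformly over all probability measures on $E$; for uniformity you need $\sup_{c\in E}|q_{r,\rho}(c)-q(c)|\to 0$. Estimating the tail of $\sum_{a_1,\dots,a_d}c_{a_1}\cdots c_{a_d}z_{-(a_1+\cdots+a_d)}$ using $|c_a|\le w_a^{-1/2}$ requires $\sum_{a\in\Z^n}w_a^{-1/2}=\sum_a(1+\|a\|^2)^{-m/2}<\infty$, i.e.\ roughly $m>n$ --- a hypothesis nowhere imposed, and violated in the paper's own example of Section \ref{sec:example-algebraic-pop} where $m=0$ (there $f(0)=\sum_a c_a$ is not even absolutely convergent on $E$). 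So either you must add a smoothness assumption of this type and carry the tail estimate through both the objective truncation and the compactness extraction in your outer-direction argument, or you must retreat to the paper's weaker reading of the theorem, in which case Propositions \ref{outdense} and \ref{inndense} plus the linear reformulation already suffice and the rest of your machinery is unnecessary.
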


{
	\begin{remark}
		Note that the values $p^{\mathrm{out}}_{r,\rho}$ resp.  $p^{\mathrm{inn}}_{r,\rho}$ are not necessarily lower resp. outer bounds on the value $p^*$ since the objective functions in our moment-SOS hierarchy are truncated to finite harmonic degree, and the sign of the remainders is not known a priori.
	\end{remark}
}

\subsubsection{Example}\label{sec:example-algebraic-pop}

Consider the algebraic Sobolev POP
$$p^* = \inf_{f \in B} \int_T (f(x)^2-1/2)^2 d\sigma(x)$$ where $\sigma$ is the Dirac measure at 0 on $B \subset H^0(T)$, i.e. $m=0$ and $n=1$.
Observe that the function to be minimized is non-convex in $f$.

Since $f(0)=\sum_{a \in \Z} c_a$, the moments \hd{\eqref{mlf}} of the linear functional in the objective function are equal to one\hd{, i.e. $z_a=L(e_a)=e_a(0)=1$} for all $a \in \Z$, so the problem can be written as the algebraic Fourier POP
$$p^* = \inf_{c \in E} q(c)$$ with
$$q(c)=
\frac{1}{4} -  \sum_{a_1,a_2 \in \Z} c_{a_1} c_{a_2} + \sum_{a_1,a_2,a_3,a_4 \in \Z} c_{a_1} c_{a_2} c_{a_3} c_{a_4}.$$
With the outer moment-SOS hierarchy, at algebraic relaxation degree $r=2$ and harmonic relaxation degree $\rho=0$, we obtain  the two global minimizers $c^*_0=\pm \sqrt{2}/2$
and the corresponding functions $f^*(x)=\pm \sqrt{2}/2$
achieving the global minimum $p^*=p^{\mathrm{out}}_{2,0}=0$.


\subsection{{Kernel Sobolev POP}}\label{sec:kernel-pop}

While {an algebraic Sobolev POP generally} requires an infinite number of Fourier coefficients to be expressed, there exists a better basis, based on {\em kernel methods} \cite{aronszajn1950theory}, where the problem admits a representation in terms of a finite number of coefficients. Since $H^m(T^n)$ is a {\em reproducing kernel Hilbert space} when $m > n/2$, there exists a kernel function $k: T^n \times T^n \to \R$ such that $k(x,y) = k(y,x)$, $k(\cdot, x) \in H^m(T^n)$ for any $x,y \in T^n$ and more importantly, we have the {reproducing property}: for any $f \in H^m(T^n)$ and any $x \in T^n$, the following holds
$$f(x) = \langle f, k(\cdot, x) \rangle_{H^m(T^n)}.$$
In particular, for the case of $H^m(T^n)$ the kernel is known in closed form in terms of the Bessel function of the second kind, {see \cite[Sec. 7.4]{berlinet2011reproducing}}.
Then the powerful and fundamental result in machine learning known as the {Representer Theorem} \cite{scholkopf2001generalized} holds. 
\begin{theorem}\label{thm:kernel-pop}
	The Sobolev POP
	\begin{equation}\label{kpop}
	\min_{f \in H^m(T^n)} p(f(x_1),\dots, f(x_l))
	\end{equation}
	for a given polynomial $p$ is equivalent to the finite-dimensional POP
	\begin{equation}\label{fdpop}
	\min_{w \in  \R^l} p({\langle c_1, w \rangle_{\R^l}},\dots, {\langle c_n, w \rangle_{\R^l}})
	\end{equation}
	where $c_j := (k(x_i, x_j))_{i=1,\ldots,l}$, $j=1,\ldots,l$. The first problem admits a solution \hd{if} and only if the second problem admits a solution, and both problems have the same value. In particular, denoting by $f^*$ the solution of the first problem and $w^*$ the solution of the second problem, we have
	$$
	f^*(\cdot) = \sum_{j=1}^l w^*_j k(\cdot, x_j).
	$$
	More generally, the Sobolev POP
	\begin{equation}\label{gkpop}\min_{f \in H^m(T^n)} p(\langle g_1, f \rangle_{H^m(T^n)}, \dots, \langle g_l, f \rangle_{H^m(T^n)})\end{equation}
	for a given polynomial $p$ and given $g_j \in H^{-m}(T^n)$ is equivalent to the finite-dimen\-sional POP \eqref{fdpop}
	where $c_j = (\langle g_i, g_j \rangle_{H^m(T^n)})_{i=1,\ldots,l}$,
	$j=1,\ldots,l$ and $$f^* = \sum_{i=1}^l w^*_j g_j.$$ 
\end{theorem}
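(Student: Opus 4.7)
The plan is to apply the classical representer-theorem argument via orthogonal decomposition onto a finite-dimensional subspace of $H^m(T^n)$. For problem \eqref{kpop}, I would first introduce the closed finite-dimensional subspace $V := \mathrm{span}\{k(\cdot,x_j) : j=1,\ldots,l\} \subset H^m(T^n)$ and write any candidate $f \in H^m(T^n)$ as the orthogonal sum $f = f_V + f_V^\perp$ with $f_V \in V$ and $f_V^\perp \in V^\perp$. Using the reproducing property, for each $j$ one has $f(x_j) = \langle f, k(\cdot,x_j)\rangle_{H^m(T^n)} = \langle f_V, k(\cdot,x_j)\rangle_{H^m(T^n)} = f_V(x_j)$, since $f_V^\perp$ is by construction orthogonal to every generator of $V$. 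Consequently, the objective depends only on the $V$-component of $f$, so the infimum over $H^m(T^n)$ equals the infimum over $V$, and a solution of one problem exists iff a solution of the other does.

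Next, parameterize $V$ by $f_V = \sum_{j=1}^l w_j\, k(\cdot,x_j)$ with $w \in \R^l$ (uniquely when the $k(\cdot,x_j)$ are linearly independent, and up to the kernel of the Gram matrix otherwise). Then $f_V(x_i) = \sum_{j=1}^l w_j\, k(x_i,x_j)$, which by the symmetry $k(x_i,x_j) = k(x_j,x_i)$ coincides with $\langle c_i, w\rangle_{\R^l}$. Substitution into the objective yields \eqref{fdpop}, the equality of optimal values, and the formula $f^* = \sum_j w^*_j\, k(\cdot,x_j)$: any $w^*$ solving \eqref{fdpop} produces an $f^*$ solving \eqref{kpop}, and conversely the $V$-projection of any solution of \eqref{kpop} gives a solution of \eqref{fdpop} with the same value.

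For the more general problem \eqref{gkpop} with $g_j \in H^{-m}(T^n)$, I would mimic the same argument via the Riesz isomorphism $R : H^{-m}(T^n) \to H^m(T^n)$, which associates to each $g_j$ an element $Rg_j \in H^m(T^n)$ satisfying $\langle g_j, f\rangle_{H^{-m},H^m} = \langle Rg_j, f\rangle_{H^m(T^n)}$ for every $f \in H^m(T^n)$. Setting $V := \mathrm{span}\{Rg_j : j=1,\ldots,l\}$ and decomposing $f = f_V + f_V^\perp$ as before, the orthogonality of $f_V^\perp$ to each $Rg_j$ gives $\langle g_j, f\rangle = \langle g_j, f_V\rangle$, so the objective again depends only on $f_V$. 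Writing $f_V = \sum_j w_j Rg_j$ yields $\langle g_i, f_V\rangle = \sum_j w_j \langle Rg_i, Rg_j\rangle_{H^m(T^n)}$, and reading $\langle g_i, g_j\rangle_{H^m(T^n)} := \langle Rg_i, Rg_j\rangle_{H^m(T^n)}$ through the Riesz identification recovers exactly the coefficients $c_j$ of \eqref{fdpop}, with $f^* = \sum_j w^*_j Rg_j$ written $\sum_j w^*_j g_j$ under the same identification.

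The only delicate point is the interpretation of $\langle g_i, g_j\rangle_{H^m(T^n)}$ for dual elements, which must be read through the Riesz isomorphism; once this convention is fixed, the reproducing-kernel case is recovered by taking $g_j = \delta_{x_j} \in H^{-m}(T^n)$, whose Riesz representative is precisely $k(\cdot,x_j)$, so that $\langle \delta_{x_i}, \delta_{x_j}\rangle_{H^m(T^n)} = \langle k(\cdot,x_i), k(\cdot,x_j)\rangle_{H^m(T^n)} = k(x_i,x_j)$ by the reproducing property, consistent with the first statement. No other obstacle arises: everything reduces to the standard orthogonal decomposition, and the finiteness of $V$ guarantees that the whole optimization problem collapses to one over $\R^l$.
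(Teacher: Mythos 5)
Your proof is correct: the orthogonal decomposition of $f$ onto the finite-dimensional span of the kernel sections $k(\cdot,x_j)$ (or of the Riesz representatives of the $g_j$), together with the observation that the objective depends only on that projection, is exactly the standard representer-theorem argument that the paper invokes by citing Sch\"olkopf et al.\ without writing out any details. Your additional care about the possibly singular Gram matrix, about the non-uniqueness of minimizers of \eqref{kpop} (since there is no norm regularizer, only \emph{some} minimizer lies in the span), and about reading $\langle g_i,g_j\rangle_{H^m(T^n)}$ through the Riesz isomorphism for $g_j \in H^{-m}(T^n)$ is sound and in fact more precise than the paper's bare statement.
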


Note that POP \eqref{kpop} is a particular case of POP \eqref{gkpop} corresponding to the choice $g_j(.) = k(.,x_j)$  since $\langle f, k(.,x_j) \rangle_{H^m(T^n)} = f(x_j)$, $j=1,\ldots,l$.

Theorem \ref{thm:kernel-pop} implies that for kernel Sobolev POPs of the form \eqref{gkpop}, we can apply the standard finite-dimensional moment-SOS hierarchy  \cite{l10} with convergence guarantees.

Theorem \ref{thm:kernel-pop} also holds when $p$ is any continuous function which is bounded below (not necessarily a polynomial),  for any measurable space $X$ beyond $T^n$, and any space of functions on $X$ that is a reproducing kernel Hilbert space, for example any Sobolev space $H^m(X)$ where $X \subseteq \R^n$ is a domain with locally Lipschitz boundary and $m > n/2$. 

\subsubsection{Example}\label{sec:example-kernel-pop}

Revisiting Ex. \ref{sec:example-algebraic-pop}, since the objective function is $(f(0)^2-1/2)^2$, i.e. $p(t)=(t^2-1/2)^2$ and $x_1=0$, $l=1$ in Sobolev POP \eqref{kpop}, it can be expressed equivalently as the univariate POP $\min_{w_1 \in \R} \, (w^2_1 k(0,0)^2 - 1/2)^2$
whose solutions are $w_1 = \pm \frac{\sqrt{2}}{2 k(0,0)}$, corresponding to the following minimizers $f^*(x)=\pm\frac{\sqrt{2}k(x,0)}{2k(0,0)}$. 

\section{Solution recovery}\label{sec:recover}

When solving infinite-dimensional calculus of variations \hd{or optimal} control problems,
we may be faced with truncated moment problems on Sobolev spaces with an increasing number of Fourier coefficients. 
When the number of Fourier coefficients goes to infinity, we know that there is a single representing measure, i.e. the infinite-dimensional moment problem is determinate.

\begin{proposition}
	A \hd{measure} $\mu$ supported on $B$ is uniquely determined by its infinite sequence of moments $(y_{\ba})_{\ba \in c_0(\Z^n)}$.
\end{proposition}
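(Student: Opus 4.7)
The plan is to transport the problem to the Fourier side and then apply the Stone--Weierstrass theorem. By Propositions \ref{ecompact} and \ref{prop:mpe}, the pushforward $\nu := F_{\#}\mu$ is a Radon measure supported on the compact ellipsoid $E \subset \ell_2(\Z^n)$; its moments $\int_E c^{\ba}\,d\nu(c)$ coincide with the $y_{\ba}$, and $\mu = F^*_{\#}\nu$ is recovered from $\nu$ via the bijection $F^*:E\to B$. Hence it suffices to prove that $\nu$ is uniquely determined by the moments indexed by $c_0(\Z^n)$.

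Next, I would observe that $E$, being compact in the separable Hilbert space $\ell_2(\Z^n)$, is a compact metric space. By the Riesz--Markov--Kakutani representation theorem, a finite Radon measure on such a space is uniquely determined by its action on $C(E)$. So the goal reduces to showing that the unital algebra $\mathcal{A} \subset C(E)$ spanned by the monomial functions $c \mapsto c^{\ba}$, $\ba \in c_0(\Z^n)$, is uniformly dense in $C(E)$: once this is established, every integral $\int_E \phi\,d\nu$ with $\phi \in C(E)$ is a uniform limit of finite linear combinations of the $y_{\ba}$, and therefore fully determined by the moment sequence.

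For the density of $\mathcal{A}$ I would invoke the complex Stone--Weierstrass theorem. The algebra contains the constants (from $\ba = \emptyset$) and the coordinate functionals $c \mapsto c_a$, which together separate the points of $E$ since distinct sequences in $\ell_2(\Z^n)$ differ in at least one coordinate. The critical point to check is self-conjugation, which holds under the convention $\overline{c_a} = c_{-a}$ adopted in the paper (witness the identity $|c_a|^2 = c_a c_{-a}$ used in the Hermitian polynomial example): then $\overline{c^{\ba}} = c^{-\ba}$ is again a monomial with index in $c_0(\Z^n)$, and $\mathcal{A}$ is stable under complex conjugation. The main obstacle is precisely this self-conjugation step, which depends on the real/complex structure convention in force. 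As a more robust alternative one can directly invoke the uniqueness clause of the infinite-dimensional Jacobi Positivstellensatz cited in the proof of Proposition \ref{outdense} (Theorem~3.9 of \cite{ikkm23}), which asserts that every linear functional on $\C[c]$ non-negative on $Q_{\infty,\infty}$ admits a \emph{unique} representing Radon measure supported on $E$; applied to the functional $\ell_y$ attached to our moment sequence, this yields uniqueness of $\nu$ at once, and hence of $\mu$.
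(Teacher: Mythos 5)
Your proposal is correct and follows essentially the same route as the paper: push the measure forward to the compact Fourier ellipsoid $E$, show the span of the monomials $c\mapsto c^{\ba}$ is a point-separating unital subalgebra that is uniformly dense in $C(E)$ by Stone--Weierstrass, and conclude that two measures with the same moments agree on all continuous functions. Your explicit check of closure under complex conjugation (via $\overline{c_a}=c_{-a}$) addresses a point the paper's proof passes over silently when it treats the span of monomials as a subalgebra of $C(E,\R)$, and your fallback to the uniqueness clause of Theorem~3.9 of \cite{ikkm23} is consistent with the machinery the paper already invokes in Proposition~\ref{outdense}.
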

\begin{proof}
	{
		Let $\nu$ be the pushforward measure via the Fourier transform $F$ associated to $\mu$ as in \eqref{pushforward}. Note that $\mu$ and $\nu$ are in one-to-one relation via the invertible linear map.
		\hd{The sequence $(y_{\ba})_{\ba \in c_0(\Z^n)}$ of moments of $\mu$ is also the unique sequence of moments of $\nu$. Indeed, $E$ is compact and Hausdorff and $P = \mathrm{span}\{c^{\ba}\}_{\ba \in c_0(\Z^n)}$ is a sublagebra of $C(E, \R)$ separating points. Then, by the Stone-Weierstrass theorem, $P$ is dense in $C(E,\R)$, i.e. for any
			$f \in C(E, \R)$,} there exists a sequence in $P$ that converges uniformly to $f$ on $E$.
		Now assume that there exist two measures $\nu, \nu'$ that lead to the same vector $y$, then
		for any function $f$, we will have a sequence $p_n \in P$ such that $\|p_n - f\|_{C(E,\R)} \to 0$ as $n \to \infty$.
		Now note that 
		$$\nu'(p) := \int p(c) d\nu'(c) = \int p(c) d\nu(c) =: \nu(p),$$
		for any $p \in P$, since $p$ is a polynomial, i.e. a finite linear combination of monomials, and the two measures $\nu, \nu'$ have the same $y$, i.e. the same moments. So we have
		
		\begin{align*}
		|\nu(f) - \nu'(f)| & \leq  |\nu(f) - \nu(p_n)| + |\nu(p_n) - \nu'(p_n)| + |\nu'(f) - \nu'(p_n)| \\
		& =   |\nu(f) - \nu(p_n)| + |\nu'(f) - \nu'(p_n)| \\
		& \leq (\nu(E) + \nu'(E)) \|f - p_n\|_{C(E,\R)}.
		\end{align*}
		The last step is due to the fact that $E$ is compact and $f$ and $p_n$ are continuous on $E$.
		Then for any $f \in C(E, \R)$
		$$|\nu(f) - \nu'(f)| = \lim_{n \to \infty} (\nu(E) + \nu'(E)) \|f - p_n\|_{C(E,\R)} = 0.$$
	}
\end{proof}

Given a sequence of moments, we may want to recover the representing measure on $B$. 
In the finite-dimensional case, the Christoffel-Darboux kernel can be used to approximate the support of a measure given its moments \cite{lpp22}.
It would be interesting to extend this kernel to Sobolev spaces.

\section{Conclusion}\label{sec:conclu}

{In this paper we address and \hd{numerically solve} the moment problem for measures supported on the unit ball of a Sobolev space. We describe how the finite-dimensional moment-SOS hierarchy can be extended to this infinite-dimensional setup, allowing to \hd{numerically solve} polynomial optimization problems on Sobolev spaces while preserving approximation and convergence guarantees.}

All our developments are done for a specific basis of complex exponentials \eqref{exp},
but similar results could be achieved for any basis with good approximation properties for the Sobolev space $H^m(T^n)$ or other reproducing kernel Hilbert spaces, as highlighted in Section~\ref{sec:kernel-pop}.

Our approach can also be generalized with \hd{exactly the} same construction to other spaces like  Sobolev spaces on general domains, Besov or Triebel-Lizorkin spaces and more generally quasi-Banach spaces where there exists a Schauder basis with reasonable approximation properties.

Finally, applications of these techniques and the infinite-dimensional moment-SOS hierarchy to the approximation of solutions of nonlinear calculus of variations problems or optimal control involving  non-linear partial differential equations remain to be investigated.

{\section*{Acknowledgments}
	
	We are grateful to Maria Infusino, Victor Magron \hd{as well as two anonymous reviewers} for useful feedback.}

\bibliographystyle{siamplain}

\end{document}